\newcommand{\intL}{\int\limits}
\newcommand{\half}{^\infty_0 }
\newcommand{\intR}{\int\limits_{\mathbb{R}} }
\newcommand{\intRR}{\int\limits_{\mathbb{R}^2} }
\newcommand{\RR}{\mathbb{R}}
\newcommand{\xx}{\mathbf{x}}
\newtheorem{thm}{Theorem}
\newtheorem{defi}[thm]{Definition}
\newtheorem{rmk}[thm]{Remark}
\newtheorem{lem}[thm]{Lemma}
\title{A Radon-type transform arising in photoacoustic tomography with circular detectors}
\author{ Sunghwan Moon}
\date{Department of Mathematical Sciences\\[-0.1em]
\normalsize
Ulsan National Institute of Science and Technology\\[-0.1em]
\normalsize
Ulsan 689-798, Republic of Korea\\[-0.1em]
\normalsize
{\tt shmoon@unist.ac.kr}}
\begin{document}
\maketitle

\begin{abstract}
Photoacoustic tomography is the most well-known example of a hybrid imaging method. 
 In this article, we define a Radon-type transform arising in a version of photoacoustic tomography that uses integrating circular detectors and describe how the Radon transform integrating over all circles with a fixed radius is determined from this Radon-type transform.
Here we consider three situations: when the centers of the circular detectors are located on a cylinder, on a plane, and on a sphere.

This transform is similar to a toroidal Radon transform, which maps a given function to its integrals over a set of tori. 
We also study this object.
\end{abstract}


\section{Introduction}

 Photoacoustic Tomography (PAT) is a noninvasive medical imaging technique based on the reconstruction of an internal photoacoustic source. Its principle is based on the excitation of high bandwidth acoustic waves with pulsed non-ionizing electromagnetic energy \cite{xuw06,zangerls09,zangerlsh09}. Ultrasound imaging often has high resolution but displays low contrast. Optical or radio-frequency EM illumination, on the other hand, gives an enormous contrast between unhealthy and healthy tissues, although it has low resolution. The photoacoustic effect, which was discovered by A.G. Bell in 1880, is the underlying physical principle of PAT. 
PAT can provide information about the chemical composition as well as the optical structure of an object. 
 
In PAT, one induces an acoustic wave inside an object of interest by delivering optical energy~\cite{kuchmentk08,xuw06}, and then one measures the acoustic wave to a surface outside of the object of interest~\cite{burgholzerbmghp07,xuw06,zangerls09}.
The initial data of the three dimensional wave equation contain diagnostic information. 
{\color{black}One of the mathematical problems of PAT} boils down to recovering this initial pressure field. 

The type of detector most often studied is a point transducer, which approximately measures the pressure at a given point. 
{\color{black}However, it is} difficult to manufacture small detectors with high bandwidth and sensitivity. 
Hence, various other types of detectors to measure the acoustic data have been introduced, such as line detectors, planar detectors, cylindrical detectors and circular detectors. 
Measurements are modeled by the integrals of pressure over the shape of the detectors. 

{\color{black}Works~\cite{zangerls09,zangerls10,zangerlsh09} have} dealt with PAT with the circular detectors. 
 {They showed that the data from PAT with circular detectors is the solution of a certain initial value problem, and they converted the problem of recovering the initial pressure field into an inversion problem for the circular Radon transform using this fact.}
Also, they assume that the circular detectors are centered on a cylinder in~\cite{zangerls09,zangerlsh09} or the circular detectors are of different radii and are lying on a surface of a sphere in~\cite{zangerls10}.
In our approach, we define a new Radon-type transform arising in this version of PAT, and consider the situation when the set of the  {centers of the circular detectors} is a cylinder (only this situation is discussed in previous works~\cite{zangerls09,zangerlsh09}) and two more situations: when the set of the  {centers of the circular detectors} is a plane or a sphere (this spherical geometry is different from that in~\cite{zangerls10}). 
This transform is similar to a toroidal Radon transform, which maps a given function into  {the set of its integrals over tori with respect to a certain non-standard measure}; we also study this mathematically similar object. 
  
This paper is organized as follows.
Section~\ref{defiandwork} is devoted to a Radon-type transform arising in PAT with circular detectors. We reduce this Radon-type transform to the Radon transform on circles with a fixed radius.
In section~\ref{sec:torus}, we define a toroidal Radon transform and reduce this transform to the circular Radon transform. 
\section{PAT with circular integrating detectors}\label{defiandwork}
In PAT, the acoustic pressure $p(\xx,t)$ satisfies the following initial value problem:
\begin{equation}\label{eq:pdeofpat}
\begin{array}{cc}
 \partial^2_tp(\xx,t)=\triangle_\xx p(\xx,t)\qquad&(\xx,t)=(x_1,x_2,x_3,t)\in\RR^3\times(0,\infty),\\
p(\xx,0)=f(\xx)&\xx\in\RR^3,\\
\partial_tp(\xx,0)=0 &\xx\in\RR^3.
\end{array}
\end{equation}
(We assume that the sound speed is equal to one everywhere including the interior of object.) The goal of PAT is to recover the initial pressure $f$ from measurements of $p$ outside the support of $f$.

\begin{figure}
\begin{center}
  \begin{tikzpicture}[>=stealth,scale=0.7]
    \draw[->] (-11,2.4) -- (-11,3);
    \draw[densely dotted] (-11,-1) -- (-11,2.4);
    \draw[->] (-9,-1) -- (-8,-1);
    \draw[densely dotted] (-11,-1) -- (-9,-1);
    \draw[->] (-12,-2) -- (-12.5,-2.5);
    \draw[densely dotted] (-11,-1) -- (-12,-2);
    \draw[very thick] (-11,1.9) ellipse (2 and 0.5) ;
    \draw[very thick, dashed] (-9,-1.5) arc (0:180:2 and 0.5) ;
    \draw[very thick] (-13,-1.5) arc (180:360:2 and 0.5) ;
    \draw[very thick] (-13,-1.5) -- (-13,1.9);
    \draw[very thick] (-9,-1.5) -- (-9,1.9);
    \draw[dashed] (-9.6,0.2) -- (-9.6,2.4);
    \draw[<->,loosely dashed] (-11,-1.5) -- (-9,-1.5);    
    \draw[blue] (-11,2.4) ellipse (0.5 and 0.2) ;
    \draw[blue] (-10.2,2.35) ellipse (0.5 and 0.2) ;
    \draw[blue] (-9.6,2.26) ellipse (0.5 and 0.2) ;
    \draw[blue] (-9.6,1.96) ellipse (0.5 and 0.2) ;
    \draw[blue] (-9.6,1.66) ellipse (0.5 and 0.2) ;
    \draw[blue] (-4.8,2.1) ellipse (0.5 and 0.2) ;
    \draw[blue] (-4.8,1.8) ellipse (0.5 and 0.2) ;
    \draw[blue] (-4.8,1.5) ellipse (0.5 and 0.2) ;
    \draw[blue] (-6,2.1) ellipse (0.5 and 0.2) ;
    \draw[blue] (-6,1.8) ellipse (0.5 and 0.2) ;
    \draw[blue] (-6,1.5) ellipse (0.5 and 0.2) ;
    \draw[dashed] (-4.8,0.2) -- (-4.8,2.5);
    \draw[->] (-6,-1) -- (-6,3);
    \draw[->] (-6,-1) -- (-3,-1);
    \draw[->] (-6,-1) -- (-7,-2);
    \draw[->] (0,2) -- (0,3);
    \draw[densely dotted] (0,0) -- (0,2);
    \draw[->] (2,0) -- (3,0);
    \draw[densely dotted] (0,0) -- (2,0);
    \draw[->] (-1.4,-1.4) -- (-2,-2);
    \draw[densely dotted] (0,0) -- (-1.5,-1.5);
    \draw[<->,loosely dashed] (0,0) -- (-1,-.4); 
    \draw[very thick] (0,0) circle (2) ;
    \draw[dashed] (2,0) arc (0:180:2 and 0.5) ;
    \draw (-2,0) arc (180:360:2 and 0.5) ;
    \draw[blue,<->] (1.6,1.2) -- (2.1,1.2);
    \draw[blue,<->] (-4.8,2.1) -- (-4.3,2.1);
    \draw[blue,<->] (-9.6,2.26) -- (-9.1,2.26);
    \draw[blue] (1,1.73205) ellipse (0.5 and 0.2) ;
    \draw[blue] (1.3,1.51987) ellipse (0.5 and 0.2) ;
        \draw[blue] (1.6,1.2) ellipse (0.5 and 0.2) ;
    \node at (-10,-1.7) {$R$};
    \node at (-0.5,0) {$R$};
    \node at (2.2,1.5) {$r_{det}$};
    \node at (-4.4,2.5) {$r_{det}$};
    \node at (-9.4,2.6) {$r_{det}$};
    \node at (-3,-0.5) {$x_2$};
    \node at (-6.3,3.2) {$x_3$};
    \node at (-6.8,-2.2) {$x_1$};
    \node at (3,-0.5) {$x_2$};
    \node at (0.5,3.2) {$x_3$};
    \node at (-2.2,-2.2) {$x_1$};
        \node at (-8,-0.5) {$x_2$};
    \node at (-11,3.2) {$x_3$};
    \node at (-11.7,-2.3) {$x_1$};
     \node at (-11,-2.8) {(a)};
    \node at (-6,-2.8) {(b)};
    \node at (0,-2.8) {(c)};
  \end{tikzpicture}
\caption{The  {centers of the circular detectors} on (a) a cylinder, (b) a plane, and (c) a sphere}
\end{center}\label{fig:circleTAT}
\end{figure}
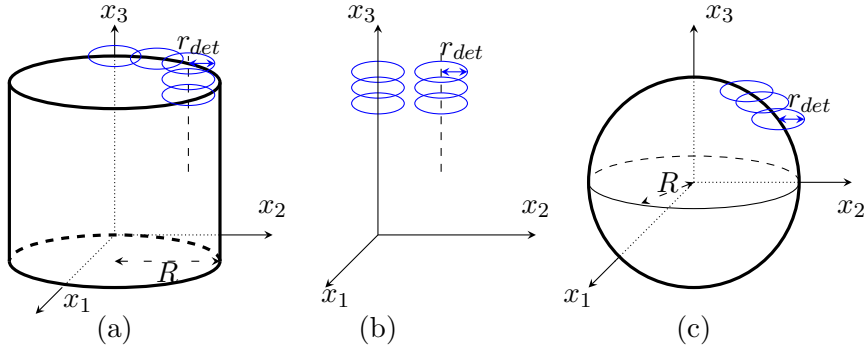 
Throughout this section, it is assumed that the initial pressure field $f$ is smooth and circular detectors are parallel to the $x_1x_2$-plane. 
As mentioned before, three geometries will be studied. 
Let the  {centers of the circular detectors}  be located on a subset $A$ of $\RR^3$. 
We consider three cases: $A$ is the cylinder $\partial B_R^2(0)\times\RR$, the $x_2x_3$-plane, or the sphere $\partial B^3_R(0)$ (see figure 1).
Here $B_R^k(\xx)=B^k(\xx,R)$ is a ball in $\RR^k$ centered at $\xx\in\RR^k$ with radius $R$. 
In other words, it is assumed that the acoustic signals are measured by a stack of parallel circular detectors where these circles are centered on a cylinder $\partial B_R^2(0)\times\RR$, on the $x_2x_3$-plane, or on the sphere $\partial B^3_R(0)$ and their radii are a constant $r_{det}$. 

The measured data $P(\mathbf a,t)$ for $(\mathbf a,t)\in A\times(0,\infty)$ can be written as
$$
P(\mathbf a,t)=\frac{1}{2\pi}\intL^{2\pi}_0p(\mathbf  a+(r_{det}\vec\alpha,0),t)d\alpha,
$$ 
where $\vec\alpha=(\cos\alpha,\sin\alpha)\in S^1$.
Also, it is a well-known fact that
$$
p(\xx,t)=\partial_t\left(\frac{1}{4\pi t}\intL_{\partial B^3(\xx,t)}f(\vec\beta)dS(\vec\beta)\right),
$$
is a solution of the IVP~\eqref{eq:pdeofpat}.
Here $dS$ is the measure on the sphere and 
$$
\vec\beta=(\cos\beta_1\sin\beta_2,\sin\beta_1\sin\beta_2,\cos\beta_2)\in S^2.
$$
Hence $P(\mathbf a,t)$ becomes
$$
\begin{array}{ll}
P(\mathbf a,t)&\displaystyle=\frac{1}{2\pi}\intL^{2\pi}_0\partial_t\left(\frac{1}{4\pi t}\intL_{\partial B^3(\mathbf a+(r_{det}\vec\alpha,0),t)}f(\vec\beta)dS(\vec\beta) \right)d\alpha\\
&\displaystyle=\frac{1}{8\pi^2}\partial_t\left(t\intL^{2\pi}_0\intL^{\pi}_0\intL^{2\pi}_0
f(\mathbf a+(r_{det}\vec\alpha,0)+t\vec\beta)\sin\beta_2 d\beta_1 d\beta_2 d\alpha\right). 
\end{array}
$$
Let us define a transform $\mathcal R_P$ by
$$
\begin{array}{l}
\displaystyle \mathcal{R}_Pf(\mathbf a,t):=\intL^{2\pi}_0\intL^{2\pi}_0\intL^{\pi}_0 f(\mathbf a+(r_{det}\vec\alpha,0)+t\vec\beta)\sin\beta_2  d\beta_2 d\beta_1d\alpha.
\end{array}
$$
We will demonstrate a relation between the Radon transform on circles with a fixed radius$-$a well studied problem$-$and $\mathcal{R}_Pf$.
This will allow us to recover $f$ from $P$.

\subsection{Reconstruction}\label{recon}

Let a transform $M_{r_{det}}f$ be defined by
$$
M_{r_{det}}f(\xx):=\intL^{2\pi}_0f(r_{det}\vec\alpha+(x_1,x_2),x_3)d\alpha,
$$
where $\xx=(x_1,x_2,x_3)\in\RR^3$.

We will show that $M_{r_{det}}f$ can be obtained from $\mathcal{R}_Pf$ when $A$ is a cylinder, a plane or a sphere.

We can easily find the inversion of the Radon transform $M_{r_{det}}f$ over all circles with a fixed radius as follows:
Taking the 2-dimensional Fourier transform of $M_{r_{det}}f$ with respect to $(x_1,x_2)$, we have for $\boldsymbol\xi=(\xi_1,\xi_2)\in\RR^2,$
$$
\widehat{M_{r_{det}}f}(\boldsymbol\xi,x_3)=2\pi\hat{f}(\boldsymbol\xi,x_3)J_0(r_{det}|\boldsymbol\xi|),
$$
where  $J_0(s)$ is the Bessel function of order zero, and 
$\widehat {M_{r_{det}}f}$ and $\hat f$ are the 2-dimensional Fourier transforms of $M_{r_{det}}f$ and $f$ with respect to $(x_1,x_2)$.
Hence we can reconstruct $f$ though
$$
f(\xx)=\frac1{4\pi^2}\intRR\widehat{M_{r_{det}}f}(\boldsymbol\xi,x_3)/J_0(r_{det}|\boldsymbol\xi|) e^{i\boldsymbol\xi\cdot(x_1,x_2)}d\boldsymbol\xi.
$$
\begin{rmk}\label{rmk:pompeiu}
When we have two circular detectors with different radii, say $r_1,r_2$, we have two different values $M_{r_{1}}f,M_{r_{2}}f$ for each $\xx$, i.e., two Radon transforms on circles with different fixed radii.
Some works~\cite{berensteingy90,thangavelu94,zalcman80} show how $f$ can be reconstructed from $M_{r_{1}}f,M_{r_{2}}f$ under a certain assumption. 
\end{rmk}

\subsubsection{Cylindrical geometry}
Let $f$ be a smooth function supported in the solid cylinder $B^2_R(0)\times\RR=\{\xx=(x_1,x_2,x_3)\in\RR^3:x_1^2+x_2^2\leq R\}$. Let the centers of the circular detectors be located on the cylinder $A=\partial B^2(0)\times\RR$.
We can represent $\mathbf a\in A$ by $(R\vec\theta,z)\in\partial B^2_R(0)\times\RR$ for $(\vec\theta,z)\in S^1\times\RR$.
Then $\mathcal R_Pf(R\vec\theta,z,t)$ is equal to
$$
\intL^{2\pi}_0\intL^{2\pi}_0\intL^{\pi}_0
f((R\vec\theta+r_{det}\vec\alpha,z)+t\vec\beta)\sin\beta_2  d\beta_2 d\beta_1d\alpha.
$$
Consider the definition of $\mathcal R_Pf$. 
The inner integral with respect to $\beta_2$ in the definition formula of $\mathcal R_Pf$ can be thought of as the circular Radon transform with weight $\sin\beta_2$.
We will first remove this integral by applying the technique previously used to derive an inversion formula for the circular Radon transform.

Let us define the operator 
 $\mathcal{R}^\#_P$ for an integrable function $g$ on $\partial B_R^2(0)\times\RR\times[0,\infty)$ by
$$
\mathcal{R}^\#_Pg(R\vec\theta,x_3,\rho)=\displaystyle\intR g(R\vec\theta,z,\sqrt{(z-x_3)^2+\rho^2})|(z-x_3,\rho)|dz.
$$
\begin{lem}\label{lem:relation}
 Let $f\in C^\infty_c(B^2_R(0)\times\RR)$.
Then we have
\begin{equation}\label{eq:circularcylinder}
\intL^{2\pi}_0\intL^{2\pi}_0f(R\vec\theta+r_{det}\vec\alpha+t\vec\beta_1,x_3) d\beta_1  d\alpha=-\frac{1}{\pi^2 t}\mathcal H_t\partial_t\mathcal{R}^\#_P\mathcal{R}_Pf(R\vec\theta,x_3,t). 
\end{equation}
Here $\mathcal H_th$ is the Hilbert transform of $h$ with respect to $t$ defined by
$$
\mathcal H_th(t)=\frac1\pi P.V.\intR h(\tau)\frac{d\tau}{t-\tau},
$$
where $P.V.$ means the principal value.
\end{lem}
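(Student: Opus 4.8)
The plan is to unfold both sides as integrals of $f$ over spheres and circles, to read $\mathcal R^\#_P$ as a method-of-descent back-projection that converts the spherical means defining $\mathcal R_Pf$ into circular means in a horizontal plane, and finally to recognize the filter $-\frac{1}{\pi^2 t}\mathcal H_t\partial_t$ as the inversion of the resulting Abel-type transform. First I would substitute the definition of $\mathcal R_P$ and note that the inner integral over $(\beta_1,\beta_2)$ with weight $\sin\beta_2$ is exactly the solid-angle integral of $f$ over the unit sphere, so that
$$\mathcal R_P f(R\vec\theta,z,t)=\intL^{2\pi}_0\frac{1}{t^2}\intL_{\partial B^3((R\vec\theta+r_{det}\vec\alpha,z),t)}f\,dS\,d\alpha,$$
a superposition over the detector circle $\{r_{det}\vec\alpha\}$ of the spherical means of $f$ centered at $(R\vec\theta+r_{det}\vec\alpha,z)$ of radius $t$. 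The geometric point driving the computation is that in $\mathcal R^\#_P$ the radius is tied to the center height by $t=\sqrt{(z-x_3)^2+\rho^2}$; hence every sphere entering $\mathcal R^\#_P\mathcal R_Pf(R\vec\theta,x_3,\rho)$ meets the plane $\{x_3=\mathrm{const}\}$ along one and the same circle, of radius $\rho$ and center $(R\vec\theta+r_{det}\vec\alpha,x_3)$ --- precisely the circle over which the left-hand side of \eqref{eq:circularcylinder} integrates.

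Next I would carry out the reduction to circular means. Parametrizing each sphere by height $w$ and azimuth $\phi$ gives $dS=t\,dw\,d\phi$, the slice at height $w$ being a circle of radius $\sqrt{t^2-(w-z)^2}$ centered at $(R\vec\theta+r_{det}\vec\alpha,w)$. Inserting the weight $|(z-x_3,\rho)|=t$ coming from $\mathcal R^\#_P$ and integrating over the detector height $z$, I would change variables from $(z,w)$ to the height $w$ and the slice radius $r$; using $t^2=(z-x_3)^2+\rho^2$ one gets $r^2=\rho^2-(w-x_3)^2+2(w-x_3)(z-x_3)$, whose Jacobian supplies the expected $1/\sqrt{\,\cdot\,}$ kernel. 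This exhibits $\mathcal R^\#_P\mathcal R_Pf(R\vec\theta,x_3,\rho)$ as an Abel-type transform, in the variable $\rho^2$, of the sought quantity $\intL^{2\pi}_0\intL^{2\pi}_0 f(R\vec\theta+r_{det}\vec\alpha+r\vec\beta_1,x_3)\,d\beta_1\,d\alpha$.

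It then remains to invert this transform. In the squared variables the transform is a Riemann--Liouville half-integral, whose inverse is a half-order derivative; rewritten in the variable $t$, this half-derivative is realized by $\partial_t$ followed by the Hilbert transform $\mathcal H_t$, while the chain-rule factors coming from the substitution $\rho^2\mapsto\rho$ assemble into the constant $-\frac{1}{\pi^2 t}$. Matching constants then yields \eqref{eq:circularcylinder}, with $\rho$ set equal to $t$.

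I expect the main obstacle to be exactly this change of variables together with its convergence bookkeeping. The entire $\rho$-dependence is concentrated near the common circle $w=x_3$, where the map $(z,w)\mapsto(w,r)$ degenerates and the naive back-projection integral is only conditionally convergent; the delicate step is therefore to isolate that singular contribution from the $\rho$-independent off-plane background, to justify differentiating under the integral sign, and to make sense of the principal-value Hilbert transform in $t$. The smoothness and compact support of $f$ are what guarantee that the resulting principal-value integrals converge and that the Abel inversion is legitimate.
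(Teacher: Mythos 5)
Your route is genuinely different from the paper's. The paper never slices spheres: it Fourier-transforms $\mathcal R_Pf$ in $z$, takes the order-zero Hankel transform of $t\widehat{\mathcal R_Pf}$ in $t$, applies the Bateman integral \eqref{eq:batemann} to turn this into $-\tfrac{2}{\xi_2}$ times the Fourier cosine transform of $b\mapsto b\cdot(\text{circular means of radius }b)$, identifies $\widehat{\mathcal R^\#_Pg}=2\pi H_0(t\hat g)$, and reads off the filter from the multiplier $|\xi_2|=(i\xi_2)(-i\operatorname{sgn}\xi_2)$, i.e.\ $\partial_t$ followed by $\mathcal H_t$. Your geometric setup is correct as far as it goes: $\mathcal R_Pf$ is a superposition of (normalized) spherical integrals, every sphere entering $\mathcal R^\#_P\mathcal R_Pf(R\vec\theta,x_3,\rho)$ contains the target circle of radius $\rho$ at height $x_3$, $dS=t\,dw\,d\phi$, and $r^2=\rho^2-(w-x_3)^2+2(w-x_3)(z-x_3)$.

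However, the central claim --- that the change of variables exhibits $\mathcal R^\#_P\mathcal R_Pf$ as an Abel (half-integral) transform in $\rho^2$ of $F(r,w):=\int_0^{2\pi}\int_0^{2\pi}f(R\vec\theta+r_{det}\vec\alpha+r\vec\beta_1,w)\,d\beta_1\,d\alpha$, inverted by a half-derivative --- is not correct, and the computation you sketch does not produce it. For fixed $w\ne x_3$ the substitution $z\mapsto r$ has $dz=\frac{r}{w-x_3}\,dr$, so integrating out $z$ first gives $\frac{1}{|w-x_3|}\int_0^\infty F(r,w)\,r\,dr$: there is no $1/\sqrt{\,\cdot\,}$ kernel, the result is independent of $\rho$, and the remaining $w$-integral diverges logarithmically at $w=x_3$. (Indeed $t\,\mathcal R_Pf(R\vec\theta,z,t)\sim\frac{2\pi}{t}\int f(y,x_3)\,dy$ as $z\to\pm\infty$, so $\mathcal R^\#_P\mathcal R_Pf$ is itself only conditionally/log-divergently defined.) Thus the entire $\rho$-dependence lives in the failure of absolute convergence, and the step you defer as ``convergence bookkeeping'' is the whole proof. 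Moreover the kernel one actually obtains is logarithmic, not Abel: equation \eqref{eq:hankel} shows the normal operator acts on $bF(b,\cdot)$ by the multiplier $c/|\xi_2|$, so $\mathcal R^\#_P\mathcal R_Pf(R\vec\theta,x_3,\rho)=c'\int_0^\infty bF(b,x_3)\log|\rho^2-b^2|\,db+\mathrm{const}$. Consistently, $-\frac{1}{\pi^2t}\mathcal H_t\partial_t$ has symbol proportional to $|\xi_2|/t$ --- a genuine first-order filter inverting $|\xi_2|^{-1}$ --- whereas a half-derivative would have symbol $|\xi|^{1/2}$ and would invert only an operator of order $-1/2$. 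If you replace your Abel kernel by this log kernel and invert it via $\partial_t\log|t^2-b^2|=\frac{1}{t-b}+\frac{1}{t+b}$ together with $\mathcal H_t\mathcal H_t=-\mathrm{id}$ (exactly the mechanism the paper uses in the proof of Theorem~\ref{thm:circle}), your real-space strategy can be carried through and would be a legitimate alternative to the paper's Fourier/Hankel argument.
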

To prove this theorem, we follow a similar method to the one used in~\cite{andersson88,fawcett85,nattererw01,nilsson97}.
\begin{proof}
By definition, $\mathcal{R}_Pf(R\vec\theta,z,t)$ can be written as
$$
-\intL^{2\pi}_0\intL^{2\pi}_0\int\limits^1_{-1}f(R\vec\theta+r_{det}\vec\alpha+t\sqrt{1-s^2}\vec\beta_1, z+ts)dsd\beta_1 d\alpha.
$$
Taking the Fourier transform of $\mathcal{R}_Pf$ with respect to $z$ yields 
$$
\widehat{\mathcal{R}_Pf}(R\vec\theta,\xi_1,t)=-\intL^{2\pi}_0\intL^{2\pi}_0\int\limits^1_{-1}\hat{f}(R\vec\theta+r_{det}\vec\alpha+t\sqrt{1-s^2}\vec\beta_1, \xi_1)e^{its\xi_1}dsd\beta_1 d\alpha,
$$
where $\hat{f}$ and $\widehat{\mathcal{R}_Pf}$ are the 1-dimensional Fourier transforms of $f$ and $\mathcal{R}_Pf$ with respect to $x_3$ and $z$, respectively.
Taking the Hankel transform of order zero of $t\widehat{\mathcal{R}_Pf}$ with respect to $t$, we have 
$$
\begin{array}{ll}
H_0(t\widehat{\mathcal{R}_Pf})(R\vec\theta,\xi_1,\eta)\\
=-\displaystyle\int\limits\half\intL^{2\pi}_0\intL^{2\pi}_0\int\limits^1_{-1}\hat{f}(R\vec\theta+r_{det}\vec\alpha+t\sqrt{1-s^2}\vec\beta_1, \xi_1)e^{its\xi_1}dsd\beta_1 d\alpha \;t^2J_0(t\eta)dt\\
=-2\displaystyle\int\limits\half\intL^{2\pi}_0\intL^{2\pi}_0\int\limits^1_0\hat{f}(R\vec\theta+r_{det}\vec\alpha+t\sqrt{1-s^2}\vec\beta_1, \xi_1)t^2J_0(t\eta)\cos(ts\xi_1)dsd\beta_1 d\alpha dt\\
=-2\displaystyle\intL^{2\pi}_0\intL^{2\pi}_0\int\limits\half\int\limits\half\hat{f}(R\vec\theta+r_{det}\vec\alpha+b\vec\beta_1, \xi_1)b\cos(\rho\xi_1) J_0(\eta\sqrt{\rho^2+b^2})d\rho dbd\beta_1d\alpha,
  \end{array}
  $$
  where in the last line, we made a change of variables $(t,s)\rightarrow (\rho,b)$ where $t=\sqrt{\rho^2+b^2}$ and $s=\rho/\sqrt{\rho^2+b^2}$.
We use the following identity: for $0<\xi_1<a$,
\begin{equation}\label{eq:batemann}
\displaystyle\int\limits\half J_0(a\sqrt{\rho^2+b^2})\cos(\rho\xi_1)d\rho=\left\{\begin{array}{ll}\dfrac{\cos(b\sqrt{a^2-\xi_1^2})}{\sqrt{a^2-\xi_1^2}} &\mbox{ if } 0<\xi_1<a,\\
0&\mbox{ otherwise}\end{array}\right.
\end{equation}
(see~\cite[p.55 (35) vol.1]{batemann}). Using this identity, $H_0(t\widehat{\mathcal{R}_Pf})(R\vec\theta,\xi_1,\eta)$ is equal to
\begin{equation*}
-2\left\{\begin{array}{ll}\displaystyle\intL^{2\pi}_0\intL^{2\pi}_0\int\limits\half\hat{f}(R\vec\theta+r_{det}\vec\alpha+b\vec\beta_1, \xi_1)b\dfrac{\cos(b\sqrt{\eta^2-\xi_1^2})}{\sqrt{\eta^2-\xi_1^2}}dbd\beta_1d\alpha\;&\mbox{ if } 0<\xi_1<\eta,\\
0&\mbox{ otherwise.}\end{array}\right.
 \end{equation*}
Substituting $\eta=\sqrt{\xi_1^2+\xi_2^2}$ yields
\begin{equation}\label{eq:hankel}
H_0(t\widehat{\mathcal{R}_Pf})(R\vec\theta,\xi_1,|\boldsymbol\xi|)=-2\displaystyle\intL^{2\pi}_0\intL^{2\pi}_0\int\limits\half\hat{f}(R\vec\theta+r_{det}\vec\alpha+b\vec\beta_1, \xi_1)\dfrac{b}{\xi_2}\cos(b\xi_2)dbd\beta_1d\alpha.
 \end{equation}
The inner integral in the right hand side of the last equation is the Fourier cosine transform with respect to $b$, so taking the Fourier cosine transform of~\eqref{eq:hankel}, we get
\begin{equation}\label{relationhankelandfourier}
\begin{array}{l}
\displaystyle\intL^{2\pi}_0\intL^{2\pi}_0 \hat{f}(R\vec\theta+r_{det}\vec\alpha+s\vec\beta_1, \xi_1)sd\beta_1d\alpha\\
\displaystyle= -\pi^{-1}\int\limits\half H_0(t\widehat{\mathcal{R}_Pf})(R\vec\theta,\xi_1,|\boldsymbol\xi|)\cos(s\xi_2)\xi_2 d\xi_2,
\end{array}
\end{equation}
where $\hat{f}$ is the Fourier transform of $f$ with respect to the last variable $x_3$.

We change the right hand side of~\eqref{relationhankelandfourier} into a term containing the operator $\mathcal{R}_P ^\#$.
Taking the Fourier transform of $\mathcal{R}^\#_Pg$ on $\partial B^2(0)\times\RR^2$ with respect to $(z,\rho)$ yields 
\begin{equation}\label{eq:relationhankelandback}
\begin{array}{ll}
   \widehat{\mathcal{R}^\#_Pg}(R\vec\theta, \boldsymbol\xi)=\displaystyle\intR\intR e^{-i(x_3,\rho)\cdot \boldsymbol\xi} \mathcal{R}^\#_Pg(R\vec\theta,x_3,\rho)dx_3d\rho\\
   =\displaystyle\intR\intR e^{-i(x_3,\rho)\cdot \boldsymbol\xi} \intR  |(x_3-z,\rho)|g(\theta,z,\sqrt{(z-x_3)^2+\rho^2} )dzdx_3d\rho\\
=\displaystyle\intR e^{-i\xi_1\cdot z} \intR\intR e^{-i(x_3-z,\rho)\cdot\boldsymbol\xi} |(x_3-z,\rho)| g(\theta,z,\sqrt{(z-x_3)^2+\rho^2} )dx_3d\rho dz\\
=\displaystyle\intR e^{-i\xi_1\cdot z}\intR\intR e^{-i(x_3,\rho)\cdot \boldsymbol\xi} |(x_3,\rho)|g(\theta,z,|(x_3,\rho)| )dx_3d\rho dz\\
=2\pi\displaystyle\intR e^{-i\xi_1\cdot z}H_0(tg)(R\vec\theta,z,|\boldsymbol\xi| )dz\\
=2\pi H_0(t\hat{g})(R\vec\theta,\xi_1,|\boldsymbol\xi| ),
  \end{array}
  \end{equation}
  where $\widehat{\mathcal{R}^\#_Pg}$ is the Fourier transform with respect to the last variable $(x_3,\rho)$.
Combining~\eqref{eq:relationhankelandback} with~\eqref{relationhankelandfourier}, we have for $g=\mathcal{R}_Pf$,
\begin{equation*}\label{relationfourierandback}
\begin{array}{ll}
\displaystyle\intL^{2\pi}_0\intL^{2\pi}_0\hat{f}(R\vec\theta+r_{det}\vec\alpha+s\vec\beta_1, \xi_1)d\beta_1d\alpha &\displaystyle
=-\frac{1}{2\pi^2 s} \int\limits\half \widehat{\mathcal{R}^\#_Pg}(R\vec\theta, \boldsymbol\xi) \cos(s\xi_2)\xi_2 d\xi_2\\
&\displaystyle=-\frac{1}{\pi^2 s} \intR \widehat{\mathcal{R}^\#_Pg}(R\vec\theta, \boldsymbol\xi) e^{is\xi_2}|\xi_2| d\xi_2\\
&\displaystyle=\frac{1}{\pi^2 s} \intR \widehat{\partial_t\mathcal{R}^\#_Pg}(R\vec\theta, \boldsymbol\xi) e^{is\xi_2}(i\operatorname{sgn}(\xi_2)) d\xi_2.
\end{array}
\end{equation*}
The fact that $\widehat{\mathcal H_th}(\xi)=(-i\operatorname{sgn}(\xi))\hat h(\xi)$ completes our proof.
\end{proof}

Again, the inner integral with respect to $\beta_1$ in the left hand side of~\eqref{eq:circularcylinder} is the circular Radon transform with centers on $\partial B^2_R(0)$ and radius $t$.
Hence, by applying an inversion formula of the circular Radon transform, we get $M_{r_{det}}f(\xx)$.

\begin{thm}\label{thm:circle}
Let $f$ be a smooth function supported in $B^2_R(0)\times\RR$.
Then for any $\xx\in\RR^3$, we have 
$$
M_{r_{det}}f(\xx)=\displaystyle\frac{1}{\pi R}\triangle_{x_1,x_2}\intL^{2\pi}_0 \mathcal{R}^\#_P\mathcal{R}_Pf(R\vec\theta,x_3, |(x_1,x_2)-R\vec\theta|)d\theta.
$$
\end{thm}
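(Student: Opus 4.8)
The plan is to read the left-hand side of Lemma~\ref{lem:relation} as an ordinary planar circular Radon transform of $M_{r_{det}}f$, and then to invert that transform. First I would apply Fubini's theorem to the double integral on the left of~\eqref{eq:circularcylinder} and interchange the $\alpha$- and $\beta_1$-integrations. Writing the argument as $r_{det}\vec\alpha+(R\vec\theta+t\vec\beta_1)$ in the first two coordinates, the $\alpha$-integral is by definition $M_{r_{det}}f$ evaluated at the shifted center, so the left-hand side becomes
$$
\intL^{2\pi}_0 M_{r_{det}}f(R\vec\theta+t\vec\beta_1,x_3)\,d\beta_1 .
$$
For each fixed slice $x_3$ this is exactly the integral of $M_{r_{det}}f(\cdot,x_3)$ over the circle of radius $t$ centered at the boundary point $R\vec\theta\in\partial B_R^2(0)$. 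Thus Lemma~\ref{lem:relation} says that the circular Radon transform of $M_{r_{det}}f(\cdot,x_3)$, with centers on $\partial B^2_R(0)$, equals $-\tfrac{1}{\pi^2 t}\mathcal H_t\partial_t\mathcal R^\#_P\mathcal R_Pf(R\vec\theta,x_3,t)$.

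Next I would invoke an inversion formula for this planar circular Radon transform with centers on the circle $\partial B^2_R(0)$, of the type derived in~\cite{andersson88,fawcett85,nattererw01,nilsson97}. The form I would aim at is the filtered back-projection
$$
g(\mathbf y)=\frac{1}{\pi R}\,\triangle_{\mathbf y}\intL^{2\pi}_0 W(R\vec\theta,|\mathbf y-R\vec\theta|)\,d\theta ,\qquad \mathbf y\in\RR^2,
$$
valid for compactly supported $g$, where $\triangle_{\mathbf y}=\triangle_{x_1,x_2}$ and the back-projection evaluates the filtered datum $W$ at radius equal to the distance $|\mathbf y-R\vec\theta|$ from $\mathbf y$ to the detector center. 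The content of such a formula is that the filtered datum $W$ is related to the circular Radon data $\mathcal C g$ of $g$ precisely through $\mathcal C g(R\vec\theta,t)=-\tfrac{1}{\pi^2 t}\mathcal H_t\partial_t W(R\vec\theta,t)$.

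The last step is to match the two filters. Comparing the identity from Step~1 with the filter relation from Step~2 shows that taking $W=\mathcal R^\#_P\mathcal R_Pf$ reproduces the circular Radon data of $g=M_{r_{det}}f(\cdot,x_3)$ exactly. Because the relation $\mathcal C g=-\tfrac{1}{\pi^2 t}\mathcal H_t\partial_t W$ determines $W$ (the operator $-\tfrac{1}{\pi^2 t}\mathcal H_t\partial_t$ is invertible on the relevant class, using $\mathcal H_t^2=-\mathrm{Id}$ as recorded in the proof of Lemma~\ref{lem:relation}), substituting $W=\mathcal R^\#_P\mathcal R_Pf$ into the back-projection yields
$$
M_{r_{det}}f(\xx)=\frac{1}{\pi R}\,\triangle_{x_1,x_2}\intL^{2\pi}_0\mathcal R^\#_P\mathcal R_Pf(R\vec\theta,x_3,|(x_1,x_2)-R\vec\theta|)\,d\theta ,
$$
which is the assertion.

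I expect the main obstacle to be establishing the inversion formula of Step~2 in exactly this normalized form: pinning down the constant $1/(\pi R)$ and checking that the filter appearing there is the very operator $-\tfrac{1}{\pi^2 t}\mathcal H_t\partial_t$ that the definition of $\mathcal R^\#_P$ was tailored to invert. I would derive it by repeating the Fourier-in-$x_3$ and Hankel-in-$t$ computation of Lemma~\ref{lem:relation}, now carried out in the two transverse variables $(x_1,x_2)$, so that the Hilbert transform and the single $\partial_t$ forced by the even-dimensional (planar) geometry are exactly those already carried by $\mathcal R^\#_P\mathcal R_Pf$, leaving only $\triangle_{x_1,x_2}$ acting on the back-projection. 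A secondary point to verify is that $M_{r_{det}}f(\cdot,x_3)$ is supported in $B^2_{R+r_{det}}(0)$ rather than strictly inside the detector circle; I would confirm that the back-projection formula still applies to this compactly supported function.
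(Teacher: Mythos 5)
Your overall architecture coincides with the paper's: both read the left-hand side of Lemma~\ref{lem:relation} as the circular Radon transform of $M_{r_{det}}f(\cdot,x_3)$ with centers on $\partial B^2_R(0)$, and both then apply a filtered back-projection of exactly the normalized form you write in Step~2, with $W=\mathcal R^\#_P\mathcal R_Pf$. The target identity you isolate in Step~2 is correct and is, in effect, what the paper's proof establishes.

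The gap is in how you propose to establish Step~2, which is the entire remaining technical content of the theorem once Lemma~\ref{lem:relation} is in hand. Repeating the Fourier-in-$x_3$/Hankel-in-$t$ computation ``in the two transverse variables $(x_1,x_2)$'' is the Andersson--Fawcett--Natterer method, and it depends on the center set being translation invariant (a line or a plane) so that one can Fourier transform along it; here the centers lie on the circle $\partial B^2_R(0)$ and that method does not apply. The paper instead follows \cite{finchhr07}: it uses the identity
$$
\intL^{2\pi}_0\log\left||(x_1,x_2)-R\vec\theta|^2-|\mathbf y-R\vec\theta|^2\right|d\theta=2\pi R\log|(x_1,x_2)-\mathbf y|+2\pi R\log R,
$$
together with the fact that $(2\pi)^{-1}\log|\cdot|$ is a fundamental solution of the planar Laplacian, to show that integrating the circular Radon data against the kernel $t\log\left|t^2-|(x_1,x_2)-R\vec\theta|^2\right|$ and applying $\triangle_{x_1,x_2}$ recovers $R\,M_{r_{det}}f(\xx)$. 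Substituting the expression from Lemma~\ref{lem:relation}, one then integrates by parts in $t$ (the $t$-derivative of $\log|t\mp\rho|$ is the Hilbert kernel $P.V.\,(t\mp\rho)^{-1}$), uses the evenness of $\mathcal R^\#_P\mathcal R_Pf$ in $t$ to extend the integral to a symmetric interval, and concludes with $\mathcal H_t\mathcal H_t=-\mathrm{Id}$; this is where the filter $-\tfrac{1}{\pi^2t}\mathcal H_t\partial_t$ is absorbed and the constant $\tfrac{1}{\pi R}$ emerges. Your secondary worry is well taken: after the change of variables $\mathbf y=R\vec\theta+t\vec\beta_1$ the effective support reaches $|\mathbf y|\le R+r_{det}$, outside the detector circle, and the log identity above is precisely the step whose range of validity this affects; the paper applies it without comment.
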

To prove this theorem, we follow the method discussed in~\cite{finchhr07}.
\begin{proof}
It is computed in~\cite{finchhr07} that
$$
\begin{array}{l}
\displaystyle\intL^{2\pi}_0\log \left||(x_1,x_2)-R\vec\theta|^2-|(y_1,y_2)-R\vec\theta|^2\right|d\theta\\
=2\pi R\log|(x_1,x_2)-(y_1,y_2)|+2\pi R\log R.
\end{array}
$$
For any measurable function $q$ on $\RR$, it is easily shown that
$$
\begin{array}{l}
\displaystyle\intL^{2(R+r_{det})}_0t\intL^{2\pi}_0\intL^{2\pi}_0
f(R\vec\theta+r_{det}\vec\alpha+t\vec\beta_1,x_3)d\beta_1d\alpha \;q(t)dt\\
\displaystyle=\intL^{2\pi}_0\intRR  f(R\vec\theta+r_{det}\vec\alpha+w,x_3)q(|\mathbf w|)d\mathbf wd\alpha.
\end{array}
$$
Applying this with $q(t)=\log\left|t^2-|(x_1,x_2)-R\vec\theta|^2\right|$ and making the change of variables $\mathbf y=(y_1,y_2)=R\vec\theta+t\vec\beta_1\in\RR^2$ give
$$
\begin{array}{l}
\displaystyle\intL^{2\pi}_0\intL^{2(R+r_{det})}_0\intL^{2\pi}_0\intL^{2\pi}_0
tf(R\vec\theta+r_{det}\vec\alpha+t\vec\beta_1,x_3)\log\left|t^2-|(x_1,x_2)-R\vec\theta|^2\right|d\beta_1d\alpha dt d\theta\\
=\displaystyle\intL^{2\pi}_0\intL^{2\pi}_0\intRR f(r_{det}\vec\alpha+\mathbf y,x_3)\log \left||(x_1,x_2)-R\vec\theta|^2-|\mathbf y-R\vec\theta|^2\right| d\mathbf y d\alpha d\theta\\
=\displaystyle 2\pi R\intL^{2\pi}_0\intRR f(r_{det}\vec\alpha+\mathbf y,x_3) (\log|(x_1,x_2)-\mathbf y|+\log R)d\mathbf y d\alpha,
\end{array}
$$
where in the last line, we used the Fubini-Tonelli theorem.
Since $(2\pi)^{-1}\log|(x_1,x_2)-(y_1,y_2)|+\log R$ is a fundamental solution of the Laplacian in $\RR^2$, we have
$$
\begin{array}{l}

\displaystyle\triangle\intL^{2\pi}_0\intL^{2(R+r_{det})}_0\intL^{2\pi}_0\intL^{2\pi}_0t
f(R\vec\theta+r_{det}\vec\alpha+t\vec\beta_1,x_3)\log\left|t^2-|(x_1,x_2)-R\vec\theta|^2\right|d\beta_1d\alpha dt d\theta\\
=R\displaystyle\intL^{2\pi}_0f((x_1,x_2)+r_{det}\vec\alpha,x_3)d\alpha,
\end{array}
$$
where $\triangle$ is the Laplacian on $(x_1,x_2)$.
Applying Lemma~\ref{lem:relation}, $M_{r_{det}}f(\xx)$ is equal to 
 $$
-\frac{1}{\pi^2 R}\triangle_{x_1,x_2}\intL^{2\pi}_0\intL^{2(R+r_{det})}_0\mathcal H_t\partial_t\mathcal{R}^\#_P\mathcal{R}_Pf(R\vec\theta,x_3, t) \log\left|t^2-|(x_1,x_2)-R\vec\theta|^2\right|dt d\theta.
$$
We note that $\mathcal H_t\partial_th=\partial_t\mathcal H_t h$,
$$ \log\left|t^2-|(x_1,x_2)-R\vec\theta|^2\right|= \log\left|t-|(x_1,x_2)-R\vec\theta|\right|+ \log\left|t+|(x_1,x_2)-R\vec\theta|\right|,
$$
and $\log|t|$ is $P.V.\frac1t$.
By integration by parts, we have 
$$
\begin{array}{ll}
M_{r_{det}}f(\xx)=&\displaystyle\frac{1}{\pi^2 R}\triangle_{x_1,x_2}\intL^{2\pi}_0P.V.\intL^{2(R+r_{det})}_0\frac{\mathcal H_t\mathcal{R}^\#_P\mathcal{R}_Pf(R\vec\theta,x_3, t)}{t-|(x_1,x_2)-R\vec\theta|}dt d\theta\\
&+\displaystyle\frac{1}{\pi^2 R}\triangle_{x_1,x_2}\intL^{2\pi}_0\intL^{2(R+r_{det})}_0\frac{\mathcal H_t\mathcal{R}^\#_P\mathcal{R}_Pf(R\vec\theta,x_3, t)}{t+|(x_1,x_2)-R\vec\theta|}dt d\theta.
\end{array}
$$ 
Since $\mathcal{R}^\#_P\mathcal{R}_Pf$ is even in $t$, so is $\mathcal H_t\mathcal{R}^\#_P\mathcal{R}_Pf$.
Substituting $t=-t$ in the second term gives
$$
M_{r_{det}}f(\xx)=\displaystyle\frac{1}{\pi^2 R}\triangle_{x_1,x_2}\intL^{2\pi}_0P.V.\intL^{2(R+r_{det})}_{-2(R+r_{det})}\frac{\mathcal H_t\mathcal{R}^\#_P\mathcal{R}_Pf(R\vec\theta,x_3, t)}{t-|(x_1,x_2)-R\vec\theta|}dt d\theta.
$$
The fact that $\mathcal H_t\mathcal H_th(t)=-h(t)$ completes our proof.
\end{proof}

\begin{rmk}\label{rmk:line}
When $A$ is a cylinder, we can reconstruct $f$ from $\mathcal{R}_Pf$ by applying Theorem~\ref{thm:circle} and the argument below Remark~\ref{rmk:pompeiu}.
\end{rmk}


\subsubsection{Planar geometry}
Let the  {centers of the circular detectors} be located on the $x_2x_3$-plane.
Then we can denote $\mathbf a\in A$ by $(y,z)\in \RR^2$.
Also, $\mathcal{R}_Pf$ is equal to zero if $f$ is an odd function in $x_1$.
We thus assume the function is even in $x_1$.
Then $\mathcal{R}_Pf$ can be written by
$$
\mathcal{R}_Pf(y,z,t)=\intL^{2\pi}_0\intL_{S^2}
f((0,y,z)+r_{det}(\vec\alpha,0)+t\vec\beta) dS(\vec\beta) d\alpha.
$$
Let $M_P$ be the the spherical Radon transform mapping a locally integrable function $f$ on $\RR^3$ into its integral over the set of spheres centered on the $x_2x_3$-plane:
$$
M_Pf(y,z,t)=\intL_{S^2}f((0,y,z)+t\vec\beta)dS(\vec\beta).
$$
Then we have
$$
\mathcal{R}_Pf(y,z,t)=\intL_{S^2}M_{r_{det}}f((0,y,z)+t\vec\beta)dS(\vec\beta)=M_P(M_{r_{det}}f)(y,z,t).
$$

It is well-known (see, e.g.~\cite{nattererw01,nilsson97}) that for $\boldsymbol\xi=(\xi_1,\xi_2,\xi_3)\in\RR^3$,
$$
\hat f(\boldsymbol\xi)=\frac{|\boldsymbol\xi||\xi_1|}{4\pi^3}\mathcal F (M_P^*M_Pf)(\boldsymbol\xi),
$$
where $\mathcal F f=\hat f$ is the $3$-dimensional Fourier transform of $f$, and 
for an integrable function $g$ on $\RR^2\times[0,\infty)$,
$$
M_P^*g(\xx)=\intRR g(y,z,\sqrt{x_1^2+(y-x_2)^2+(z-x_3)^2})dydz.
$$

\begin{thm}
Let $f\in C^\infty_c(\RR^3)$ be even in $x_1$.
Then we have
$$
M_{r_{det}}f(\xx)=\frac{1}{2^5\pi^6}\intL_{\RR^3}|\boldsymbol\xi||\xi_1|\mathcal F (M_P^*\mathcal R_Pf)(\boldsymbol\xi)e^{i\boldsymbol\xi\cdot\xx}d\boldsymbol\xi.
$$
\end{thm}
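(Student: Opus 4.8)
The plan is to reduce the statement entirely to the inversion formula for the planar spherical mean transform $M_P$ recorded just above, with $M_{r_{det}}f$ playing the role of the unknown in that formula. First I would check that $M_{r_{det}}f$ satisfies the hypotheses under which that formula holds. Since $f\in C^\infty_c(\RR^3)$ and the circular average defining $M_{r_{det}}$ preserves smoothness and compact support, $M_{r_{det}}f\in C^\infty_c(\RR^3)$. Moreover, because $f$ is even in $x_1$, the substitution $\alpha\mapsto\pi-\alpha$ in the defining integral of $M_{r_{det}}f$ shows that $M_{r_{det}}f$ is again even in $x_1$. This evenness is exactly what is needed: $M_P$ integrates over spheres centered on the plane $\{x_1=0\}$, so it annihilates the odd-in-$x_1$ part of any function, and the quoted formula $\hat f=\frac{|\boldsymbol\xi||\xi_1|}{4\pi^3}\mathcal F(M_P^*M_Pf)$ recovers $f$ only under this symmetry.

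Next I would invoke the composition identity established above the statement, namely $\mathcal R_Pf=M_P(M_{r_{det}}f)$, and apply the backprojection $M_P^*$ to both sides to get $M_P^*\mathcal R_Pf=M_P^*M_P(M_{r_{det}}f)$. Substituting $M_{r_{det}}f$ for $f$ in the inversion formula for $M_P$, which is legitimate by the previous paragraph, then yields
$$
\mathcal F(M_{r_{det}}f)(\boldsymbol\xi)=\frac{|\boldsymbol\xi||\xi_1|}{4\pi^3}\mathcal F(M_P^*M_P(M_{r_{det}}f))(\boldsymbol\xi)=\frac{|\boldsymbol\xi||\xi_1|}{4\pi^3}\mathcal F(M_P^*\mathcal R_Pf)(\boldsymbol\xi).
$$

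Finally I would apply the inverse three-dimensional Fourier transform, with the normalization constant $(2\pi)^{-3}$, to recover $M_{r_{det}}f$ pointwise:
$$
M_{r_{det}}f(\xx)=\frac{1}{(2\pi)^3}\intL_{\RR^3}\mathcal F(M_{r_{det}}f)(\boldsymbol\xi)e^{i\boldsymbol\xi\cdot\xx}d\boldsymbol\xi=\frac{1}{8\pi^3}\cdot\frac{1}{4\pi^3}\intL_{\RR^3}|\boldsymbol\xi||\xi_1|\mathcal F(M_P^*\mathcal R_Pf)(\boldsymbol\xi)e^{i\boldsymbol\xi\cdot\xx}d\boldsymbol\xi,
$$
and since $8\pi^3\cdot4\pi^3=32\pi^6=2^5\pi^6$, the constant collapses to the asserted $\frac{1}{2^5\pi^6}$.

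The argument is short, so the main obstacle is bookkeeping rather than analysis. The one point that genuinely must be verified is that evenness in $x_1$ is inherited by $M_{r_{det}}f$; without it, the $M_P$ inversion would return only the even part and the identity would break. I would also confirm that $M_P^*\mathcal R_Pf$ decays fast enough that both its Fourier transform and the weighted inverse transform converge, which again follows from the compact support of $f$ and hence of $M_{r_{det}}f$. Everything else is direct substitution into formulas the paper has already supplied.
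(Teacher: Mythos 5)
Your proposal is correct and is precisely the argument the paper intends: the theorem is stated immediately after the identity $\mathcal R_Pf=M_P(M_{r_{det}}f)$ and the quoted inversion formula for $M_P$, and combining them with three-dimensional Fourier inversion (with the constant $(2\pi)^{-3}\cdot(4\pi^3)^{-1}=2^{-5}\pi^{-6}$) is exactly the omitted proof. Your explicit check that $M_{r_{det}}f$ inherits evenness in $x_1$ via the substitution $\alpha\mapsto\pi-\alpha$ is a worthwhile detail the paper leaves tacit.
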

Now $f$ can be determined applying the argument below Remark~\ref{rmk:pompeiu}.
\begin{rmk}
Redding and Newsam derived another inversion formula for the spherical Radon transform $M_P$ in~\cite{reddingn01}. Using this inversion formula, we can also reconstruct $M_{r_{det}}f$ from $\mathcal R_Pf$.
\end{rmk}
\subsubsection{Spherical geometry}
Let the  {centers of the circular detectors} be located on the sphere $\partial B^3_R(0)$.
Then we can denote $\mathbf a\in A$ by $R\vec\omega\in \partial B^3_R(0)$ for $\vec\omega\in S^2$.
Then $\mathcal{R}_Pf$ can be written by
$$
\mathcal{R}_Pf(R\vec\omega,t)=\intL^{2\pi}_0\intL_{S^2}
f(R\vec\omega+r_{det}(\vec\alpha,0)+t\vec\beta) dS(\vec\beta) d\alpha.
$$
Let $M_S$ be the spherical Radon transform mapping a locally integrable function $f$ on $\RR^3$ into its integral over the spheres centered at the $\partial B_R^3(0)$:
$$
M_Sf(\vec\omega,t)=\intL_{S^2}f(R\vec\omega+t\vec\beta)dS(\vec\beta).
$$
Then we have
$$
\mathcal{R}_Pf(R\vec\omega,t)=\intL_{S^2}M_{r_{det}}f(R\vec\omega+t\vec\beta)dS(\vec\beta)=M_S(M_{r_{det}}f)(\vec\omega,t).
$$

It is well-known (see, e.g.~\cite{finchpr04}) that
$$
f(\xx)=-\frac{R }{2^3\pi^2 }\intL_{S^2}\frac{\left.\partial^2_tt^2M_Sf( \vec\omega,t)\right|_{t=|R\vec\omega-\xx|}}{|R\vec\omega-\xx|}dS(\vec\omega).
$$

\begin{thm}
Let $f\in C^\infty_c(B^3_R(0))$.
Then we have
$$
M_{r_{det}}f(\xx)=-\frac{R }{2^3\pi^2}\intL_{S^2}\frac{\left.\partial^2_tt^2\mathcal R_Pf(R\vec\omega,t)\right|_{t=|R\vec\omega-\xx|}}{|R\vec\omega-\xx|}dS(\vec\omega).
$$
\end{thm}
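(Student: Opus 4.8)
The statement is an immediate consequence of the composition identity $\mathcal{R}_P f = M_S(M_{r_{det}}f)$ recorded just above it, combined with the inversion formula for the spherical Radon transform $M_S$ quoted from~\cite{finchpr04}. The plan is therefore not to compute anything new, but to apply the known $M_S$-inversion formula to the auxiliary function $g:=M_{r_{det}}f$ rather than to $f$ itself, and then to rewrite $M_S g$ in terms of the measured data $\mathcal{R}_P f$.

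First I would set $g=M_{r_{det}}f$ and isolate the one relation I actually need, namely that the spherical means of $g$ over spheres centered on $\partial B^3_R(0)$ reproduce $\mathcal{R}_P f$:
$$
M_S g(\vec\omega,t)=\intL_{S^2} M_{r_{det}}f(R\vec\omega+t\vec\beta)\,dS(\vec\beta)=\mathcal{R}_P f(R\vec\omega,t).
$$
This is exactly the composition identity established above (a Fubini interchange of the $\alpha$- and $\beta$-integrations). Feeding $g$ into the Finch--Patch--Rakesh formula gives
$$
g(\xx)=-\frac{R}{2^3\pi^2}\intL_{S^2}\frac{\left.\partial_t^2 t^2 M_S g(\vec\omega,t)\right|_{t=|R\vec\omega-\xx|}}{|R\vec\omega-\xx|}\,dS(\vec\omega),
$$
and substituting $M_S g=\mathcal{R}_P f$ on the right and $g=M_{r_{det}}f$ on the left yields the claimed identity with no further manipulation.

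The hard part will not be any algebra but verifying that $g=M_{r_{det}}f$ genuinely lies in the class to which the $M_S$-inversion formula applies, i.e.\ that it is smooth and supported in $\overline{B^3_R(0)}$. Smoothness is free, since $M_{r_{det}}$ is a one-dimensional circular average and preserves $C^\infty$. The support condition is the delicate point: averaging $f$ over circles of radius $r_{det}$ enlarges the horizontal extent of the support, so a priori $M_{r_{det}}f$ is supported only in $B^3_{R+r_{det}}(0)$, not in $B^3_R(0)$, and the formula from~\cite{finchpr04} requires support inside the sphere carrying the detector centers. I would address this either by strengthening the hypothesis to $f\in C^\infty_c(B^3_{R-r_{det}}(0))$ -- the physically natural regime, in which the detector circles lie outside the support of $f$ and one checks directly that $M_{r_{det}}f\in C^\infty_c(B^3_R(0))$ -- or, keeping the stated hypothesis, by restricting attention to points $\xx$ interior to the reconstruction region and confirming that the backprojection on the right still returns $g(\xx)$ there. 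Once this support bookkeeping is settled, the two displays above constitute the entire proof.
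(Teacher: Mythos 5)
Your proposal is correct and follows essentially the same route as the paper, which likewise treats the theorem as an immediate consequence of the composition identity $\mathcal{R}_Pf=M_S(M_{r_{det}}f)$ and the quoted Finch--Patch--Rakesh inversion formula, offering no further proof. Your observation about the support condition is a genuine point the paper glosses over: $M_{r_{det}}f$ need not be supported in $B^3_R(0)$ when $f$ merely is, and strengthening the hypothesis to $f\in C^\infty_c(B^3_{R-r_{det}}(0))$ (or an equivalent restriction) is indeed needed for the cited formula to apply.
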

Again $f$ can be determined applying the argument below Remark~\ref{rmk:pompeiu}.
\begin{rmk}
Kunyansky derived two other inversion formulas for the spherical Radon transform $M_S$ in~\cite{kunyansky07,kunyansky071}. Using these inversion formulas, we can reconstruct $M_{r_{det}}f$ from $\mathcal R_Pf$.
\end{rmk}


\section{A toroidal Radon transform}\label{sec:torus}
As mentioned before, we study the toroidal Radon transform, which is a mathematically similar object to $\mathcal R_P$, in this section. 
(When integrating over the tori, the standard area measure is not used.)
Although we have not been able to establish the direct link between PAT with circular detectors and the toroidal Radon transform, studying the toroidal Radon transform is an interesting geometric problem in its own right.

We assume that all tori are parallel to the $x_1x_2$-plane and consider two geometries: the centers of tori are located on a cylinder, or on a plane.
\begin{defi}
Let $u>0$ be a radius of the central circles of tori. 
Let $A\times\RR\subset\RR^2\times\RR$ be the set of the centers of tori.
The toroidal Radon transform $R_T$ maps $f\in C^\infty_c(\RR^3)$ into 
 \begin{equation}\label{eq:torusradon}
R_Tf(\boldsymbol\mu,p,r)=\displaystyle\frac{1}{2\pi }\intL^{2\pi}_0\int\limits^{2\pi}_0 f(\boldsymbol\mu+(u-r\cos\beta)\vec\alpha,p+r\sin\beta)d\beta d \alpha,
 \end{equation}
for $(\boldsymbol\mu,p,r)\in A\times\RR\times(0,\infty)$.
Here $\alpha$ is the angular parameter along the central circle, $(\boldsymbol\mu,p)$ is the center of the torus, and $\beta$ and $r$ are the polar angle and the radius of the tube of the torus, respectively.
\end{defi}

We consider two situations: $A$ is the circle $\partial B^2_R(0)$ or the line $x_1=0$. Thus the set of the centers of tori is a cylinder $\partial B^2_R(0)\times\RR$ or the $x_2x_3$-plane. 
We then present the relation between the circular Radon transform and the toroidal Radon transform.
This relation leads naturally to an inversion formula, if one uses an inversion formula for the circular Radon transform (already discussed in~\cite{finchhr07,kunyansky07} or \cite{andersson88,fawcett85,nattererw01,nilsson97,reddingn01}). 
\begin{defi}
Let $f$ be a compactly supported function in $\RR^3$.
The circular Radon transform $M_{}$ maps a function $f$ into 
$$
M_{}f(\boldsymbol\mu,x_3,r)=\intL^{2\pi}_0 f(\boldsymbol\mu+r\vec\alpha,x_3)d\alpha \quad\mbox{ for } (\boldsymbol\mu,x_3,r)\in A\times\RR\times(0,\infty).
$$
\end{defi}


\subsection{Reconstruction}\label{recontorus}
The inner integral with respect to $\beta$ in \eqref{eq:torusradon} can be thought of as the circular Radon transform with radius $r$.
As in subsection~\ref{recon}, we will first invert this transform.

Let us define the operator 
 $R^*_T$ for $g\in C^\infty_c(A\times\RR\times[0,\infty))$ by
$$
R^*_Tg(\boldsymbol\mu,z,\rho)=\displaystyle\intR g(\boldsymbol\mu,p,\sqrt{(z-p)^2+\rho^2})dp,
$$
where $(\boldsymbol\mu,z,\rho)\in A\times\RR^2$.
The following two lemmas show the relation between the circular and the toroidal Radon transforms.

Let us define the linear operator $I_2^{-1}$ by $\widehat{I^{-1}_2h}(\boldsymbol\mu,\boldsymbol\xi)=|\xi_2|\hat{h}(\boldsymbol\mu,\boldsymbol\xi)$,
where $h$ is a function on $A\times\RR^2$ and $\hat h$ is its Fourier transform in the last two-dimensional variable.
\begin{lem}\label{lem:andersson}
 Let $f\in C^\infty_c(\RR^3)$.
Then we have
\begin{equation}\label{eq:invtorus}
\displaystyle  \frac{1}{2} I^{-1}_2R^*_TR_Tf(\boldsymbol\mu,x_3, r)=\left\{\begin{array}{ll}M_{}f(\boldsymbol\mu,x_3,u-r)+M_{}f(\boldsymbol\mu,x_3,u+r)&\mbox{ if } u>r,\\\\
 M_{}f(\boldsymbol\mu,x_3,r-u)+M_{}f(\boldsymbol\mu,x_3,u+r)&\mbox{ otherwise, }\end{array}\right.
\end{equation}
\end{lem}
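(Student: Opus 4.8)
The plan is to follow the proof of Lemma~\ref{lem:relation} almost line for line, the only genuinely new ingredient being the shift by the central radius $u$. For fixed $\boldsymbol\mu$ and $\vec\alpha$, write $g_{\vec\alpha}(\sigma,x_3)=f(\boldsymbol\mu+\sigma\vec\alpha,x_3)$. Then the point $(\boldsymbol\mu+(u-r\cos\beta)\vec\alpha,\,p+r\sin\beta)$ in \eqref{eq:torusradon} traces, in the vertical slice spanned by $\vec\alpha$ and the $x_3$-axis, the circle of radius $r$ centred at $(\sigma,x_3)=(u,p)$. Hence the inner $\beta$-integral is a two-dimensional circular Radon transform of $g_{\vec\alpha}$, and as $p$ and $r$ vary the data $R_Tf$ are circular Radon data with centres on the vertical line $\{\sigma=u\}$. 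The same Fourier--Hankel inversion used for Lemma~\ref{lem:relation} therefore applies slice by slice.

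Concretely, I would first take the one-dimensional Fourier transform of $R_Tf$ in $p$ (dual variable $\xi_1$) and then the Hankel transform $H_0$ of order zero in $r$. Splitting the $\beta$-integral into the arcs $\cos\beta>0$ and $\cos\beta<0$ and setting $s=\sin\beta$ replaces the tube circle by the two families $\boldsymbol\mu+(u\pm r\sqrt{1-s^2})\vec\alpha$ at height $p+rs$. The change of variables $\rho=rs,\ b=r\sqrt{1-s^2}$ (so that $r=\sqrt{\rho^2+b^2}$ and the slice data cover the half-plane $b>0$, $\rho\in\RR$) then collapses the $\rho$-integral through the Bateman identity \eqref{eq:batemann}: with $\sqrt{|\boldsymbol\xi|^2-\xi_1^2}=|\xi_2|$ one gets the factor $\cos(b|\xi_2|)/|\xi_2|$, exactly as in \eqref{eq:hankel}, leaving for each arc a Fourier cosine transform in $b$ of $\hat f(\boldsymbol\mu+(u\pm b)\vec\alpha,\xi_1)$. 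These two arcs are precisely the origin of the two terms on the right-hand side of \eqref{eq:invtorus}: the outer arc contributes radius $u+b$ and the inner arc radius $u-b$.

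Next I would realise $R^*_T$ as the matching backprojection. Repeating the computation \eqref{eq:relationhankelandback} --- but now with no extra weight, since the one-dimensional tube carries no $|(\cdot)|$ factor, in contrast to the sphere in Lemma~\ref{lem:relation} --- the substitution $w=z-p$ together with the radial two-dimensional Fourier transform yields $\widehat{R^*_Tg}(\boldsymbol\mu,\boldsymbol\xi)=2\pi H_0(\hat g)(\boldsymbol\mu,\xi_1,|\boldsymbol\xi|)$, where $\hat g$ is the Fourier transform of $g$ in its second variable $p$. Taking $g=R_Tf$ and combining with the previous step writes $\widehat{R^*_TR_Tf}$ as $|\xi_2|^{-1}$ times the $\alpha$- and $b$-integrals of $\cos(b|\xi_2|)\,[\hat f(\boldsymbol\mu+(u+b)\vec\alpha,\xi_1)+\hat f(\boldsymbol\mu+(u-b)\vec\alpha,\xi_1)]$. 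Applying $I^{-1}_2$, i.e.\ multiplying by $|\xi_2|$, cancels this factor, and inverting the Fourier transform in $(\xi_1,\xi_2)$ produces a delta distribution in $\xi_2$ that forces $b=r$ while the $\xi_1$-inversion restores $f$ at the height $x_3$. Collecting constants gives the two circular integrals $\intL^{2\pi}_0 f(\boldsymbol\mu+(u+r)\vec\alpha,x_3)\,d\alpha$ and $\intL^{2\pi}_0 f(\boldsymbol\mu+(u-r)\vec\alpha,x_3)\,d\alpha$. The case distinction in \eqref{eq:invtorus} is then only a matter of recording radii as nonnegative numbers: when $u>r$ both $u\pm r$ are positive, so these are $M f(\boldsymbol\mu,x_3,u-r)$ and $M f(\boldsymbol\mu,x_3,u+r)$ directly, whereas when $u\le r$ the substitution $\vec\alpha\mapsto-\vec\alpha$ turns $\intL^{2\pi}_0 f(\boldsymbol\mu+(u-r)\vec\alpha,x_3)\,d\alpha$ into $M f(\boldsymbol\mu,x_3,r-u)$.

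I expect the difficulty to be bookkeeping rather than conceptual. The delicate point is to propagate all normalization constants correctly --- the factor $1/(2\pi)$ in the definition of $R_T$, the $2\pi$ contributed by each Fourier and Hankel passage, and the doubling that occurs when the evenness of the integrand lets one write $\intR(\cdot)\,d\rho=2\intL^\infty_0(\cdot)\,d\rho$ before applying \eqref{eq:batemann} --- so that the coefficient $\tfrac12$ in \eqref{eq:invtorus} emerges cleanly. One must also note that $\cos(b|\xi_2|)$ is even in $\xi_2$, so that the $\xi_2$-inversion selects exactly $b=r$, check the hypothesis $\xi_1<|\boldsymbol\xi|$ of \eqref{eq:batemann} (automatic since $|\boldsymbol\xi|\ge|\xi_1|$), and justify the interchanges of integration; all of these are harmless for $f\in C^\infty_c(\RR^3)$.
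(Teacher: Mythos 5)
Your proposal is correct and follows essentially the same route as the paper: split the tube circle into the two arcs $\cos\beta\gtrless 0$, take the Fourier transform in $p$ and the Hankel transform of order zero in $r$, change variables to $(\rho,b)$, apply the Bateman identity \eqref{eq:batemann}, recognize the resulting $b$-integral as a Fourier cosine transform, and identify $\widehat{R^*_Tg}(\boldsymbol\mu,\boldsymbol\xi)=2\pi H_0\hat g(\boldsymbol\mu,\xi_1,|\boldsymbol\xi|)$ so that multiplying by $|\xi_2|$ (the operator $I^{-1}_2$) inverts the cosine-transform step. Your observations about the absence of the weight $|(z-p,\rho)|$ relative to Lemma~\ref{lem:relation} and the $\vec\alpha\mapsto-\vec\alpha$ substitution handling the $u\le r$ case are exactly the points the paper relies on (the latter implicitly).
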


To prove this lemma, we follow the method discussed in~\cite{andersson88,nattererw01,nilsson97}.
\begin{proof}
By definition, we have
$$
R_Tf(\boldsymbol\mu,p,r)
=\displaystyle\frac{1}{2\pi }\sum^2_{j=1}\intL^{2\pi}_0\int\limits^1_{-1} f(\boldsymbol\mu+(u+(-1)^jr\sqrt{1-s^2})\vec\alpha,p+rs)\frac{ds}{\sqrt{1-s^2}}d\alpha.
$$
We take the Fourier transform of $R_Tf$ with respect to $p$ and the Hankel transform of order zero of $\widehat{R_Tf}$ with respect to $r$. 
Then $H_0\widehat{R_Tf}(\boldsymbol\mu,\xi_1,\eta)$ can be written as
\begin{equation}\label{hankelrtftorus}
\begin{array}{ll}
\displaystyle\frac{1}{2\pi }\sum^2_{j=1}\intL^{2\pi}_0\int\limits\half\int\limits\half\hat{f}(\boldsymbol\mu+(u+(-1)^jb)\vec\alpha, \xi_1)\cos(\rho\xi_1) J_0(\eta\sqrt{\rho^2+b^2})d\rho dbd\alpha,
  \end{array}
\end{equation}
where $\hat{f}$ and $\widehat{R_Tf}$ are the 1-dimensional Fourier transforms of $f$ and $R_Tf$ with respect to $z$ and $p$, respectively.
Lastly, we change variables $(r,s)\rightarrow (\rho,b)$, where $r=\sqrt{\rho^2+b^2}$ and $s=\rho/\sqrt{\rho^2+b^2}$.
Applying~\eqref{eq:batemann} to~\eqref{hankelrtftorus}, we get 
\begin{equation*}
H_0\widehat{R_Tf}(\boldsymbol\mu,\xi_1,|\boldsymbol\xi|)=\displaystyle\frac{1}{2\pi }\sum^2_{j=1}\intL^{2\pi}_0\int\limits\half\hat{f}(\boldsymbol\mu+(u+(-1)^jb)\vec\alpha, \xi_1)\dfrac{\cos(b\xi_2)}{\xi_2}dbd\alpha.
 \end{equation*}
The inner integral in the right hand side of the last equation is the Fourier cosine transform with respect to $b$, so taking the inverse Fourier cosine transform of the above formula, we get 
\begin{equation}\label{relationhankelandfouriertorus}
\displaystyle\sum^2_{j=1}\intL^{2\pi}_0 \hat{f}(\boldsymbol\mu+(u+(-1)^js)\vec\alpha, \xi_1)d\alpha= 4\int\limits\half H_0\widehat{R_Tf}(\boldsymbol\mu,\xi_1,|\boldsymbol\xi|)\cos(s\xi_2)\xi_2 d\xi_2.
\end{equation}
For a fixed $\xi_1$, one recognizes the sum of two circular Radon transforms on the left. 

Similarly to~\eqref{eq:relationhankelandback}, we can change the right hand side of~\eqref{relationhankelandfouriertorus} into a term containing operator $R_T ^*$, i.e.,
\begin{equation}\label{eq:relationhankelandbacktorus}
   \widehat{R^*_Tg}(\boldsymbol\mu, \boldsymbol\xi)=2\pi H_0\hat{g}(\boldsymbol\mu,\xi_1,|\boldsymbol\xi| ).
  \end{equation}
 Here $\widehat{R^*_Tg}$ is the 2-dimensional Fourier transform with respect to the variables $(z,\rho)$.
Combining~\eqref{eq:relationhankelandbacktorus} with~\eqref{relationhankelandfouriertorus}, we have for $g=R_Tf$,
\begin{equation*}\label{relationfourierandback}
\begin{array}{ll}
\displaystyle \sum^2_{j=1}\int\limits_{0}^{2\pi} \hat{f}(\boldsymbol\mu+(u+(-1)^js)\vec\alpha, \xi_1)d\alpha &\displaystyle=\frac{2}{\pi} \int\limits\half \widehat{R^*_Tg}(\boldsymbol\mu, \boldsymbol\xi) \cos(s\xi_2)\xi_2 d\xi_2\\
&\displaystyle=\frac{1}{\pi} \intR \widehat{R^*_Tg}(\boldsymbol\mu, \boldsymbol\xi) e^{is\xi_2}|\xi_2| d\xi_2,
\end{array}
\end{equation*}
since $\widehat{R^*_Tg}$ is even in $\xi_2$.
\end{proof}

\begin{lem}\label{lem:redding}
  Let $f\in C^\infty_c(\RR^3)$.
Then we have
$$
\begin{array}{l}
\displaystyle\frac{2}{\pi}\intR\intR \int\limits\half rsR_Tf(\boldsymbol\mu,-\eta,s)  e^{-i(s^2+2x_3\eta+x_3^2-\eta^2+r^2)\xi}\xi ds d\eta d\xi\\
=\left\{\begin{array}{ll}M_{}f(\boldsymbol\mu,x_3,u-r)+M_{}f(\boldsymbol\mu,x_3,u+r)&\mbox{ if } u>r,\\\\
 M_{}f(\boldsymbol\mu,x_3,r-u)+M_{}f(\boldsymbol\mu,x_3,u+r)&\mbox{ otherwise}.
 \end{array}\right.
 \end{array}
$$
To prove this lemma, we follow the method discussed in~\cite{reddingn01}.
\end{lem}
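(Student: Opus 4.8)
The plan is to recognize the left-hand side as an alternative, Fourier-analytic inversion (in the spirit of~\cite{reddingn01}) of the very same inner circular Radon transform that was inverted by Hankel methods in Lemma~\ref{lem:andersson}; indeed the right-hand side here is \emph{identical} to that of Lemma~\ref{lem:andersson}, so the two lemmas are simply two formulas for one quantity. First I would interchange the $\alpha$- and $\beta$-integrals in the definition~\eqref{eq:torusradon} to write $R_Tf(\boldsymbol\mu,p,s)=\frac1{2\pi}\intL^{2\pi}_0 g_{\boldsymbol\mu}(u-s\cos\beta,p+s\sin\beta)\,d\beta$, where $g_{\boldsymbol\mu}(\rho,z):=M_{}f(\boldsymbol\mu,z,\rho)$. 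Since $g_{\boldsymbol\mu}(-\rho,z)=g_{\boldsymbol\mu}(\rho,z)$ (the transform $M_{}$ is even in its radial argument, by the substitution $\alpha\mapsto\alpha+\pi$), this exhibits $R_Tf(\boldsymbol\mu,\cdot,\cdot)$ as the two-dimensional circular mean of the even function $g_{\boldsymbol\mu}$ over the circle of radius $s$ centred at $(u,p)$ in the cross-sectional $(\rho,z)$-plane.

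Next I would pass to the oscillatory representation of this circular mean. Writing the mean with the curve delta $\delta\big((\rho-u)^2+(z-p)^2-s^2\big)$ and using $\delta(a)=\frac1{2\pi}\intR e^{ia\xi}\,d\xi$ produces exactly the quadratic phase appearing in the statement; this is the source of the $s^2+\cdots+r^2$ exponent. I would substitute this representation (with $p=-\eta$) into the triple integral on the left and, after justifying the interchange of the order of integration, carry out the integrals in $\eta$ and $s$. The weight $rs$ together with the factor $\xi$ is precisely the Redding--Newsam reconstruction filter: evaluating the Fresnel-type $\eta$-integral together with the $s$- and $\xi$-integrals collapses the expression to two delta factors, one enforcing the height match $z=x_3$ and one enforcing the radial match $(\rho-u)^2=r^2$.

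Finally, using $\delta\big((\rho-u)^2-r^2\big)=\frac1{2r}\big[\delta(\rho-u-r)+\delta(\rho-u+r)\big]$, the remaining $\rho$-integral evaluates $g_{\boldsymbol\mu}$ at $\rho=u+r$ and $\rho=u-r$; the explicit $r$ in the weight cancels the $\tfrac1{2r}$, leaving $g_{\boldsymbol\mu}(u+r,x_3)+g_{\boldsymbol\mu}(u-r,x_3)$. Invoking the evenness of $g_{\boldsymbol\mu}$ in $\rho$ to rewrite $g_{\boldsymbol\mu}(u-r,x_3)=M_{}f(\boldsymbol\mu,x_3,|u-r|)$ yields the two stated cases according to the sign of $u-r$, with the constant $\tfrac2\pi$ accounting for the normalizations above. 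The hard part will be the rigorous handling of these non--absolutely-convergent oscillatory integrals: legitimizing the interchange of integration order for $f\in C^\infty_c$ (for instance by regularizing the $\xi$-integral or reading the identities in the sense of tempered distributions), and tracking the Fresnel $\eta$-integral together with all constants and signs carefully enough to land exactly on the factor $\tfrac2\pi$ and the precise phase $s^2+2x_3\eta+x_3^2-\eta^2+r^2$.
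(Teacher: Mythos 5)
Your overall strategy is sound, and at bottom it is the same Redding--Newsam idea the paper uses: linearize the circle by passing to the squared radius and let a chirp kernel in $\xi$ perform the inversion. The packaging differs. The paper never factors through $M f$: it defines $G(\boldsymbol\mu,p,\xi)=\int_0^\infty rR_Tf(\boldsymbol\mu,p,r)e^{-ir^2\xi}dr$, makes the change of variables $r=y^2+z^2$ to build an integrable kernel $k_{\boldsymbol\mu}$ (your curve delta, with the Jacobian $1/\sqrt{r-z^2}$ made explicit), identifies the $\alpha$-average of $\widehat{k_{\boldsymbol\mu}}$ with $G$, and then invokes two-dimensional Fourier inversion, so no distributions appear until the last step. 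Your explicit factorization $R_Tf(\boldsymbol\mu,p,s)=\frac1{2\pi}\int_0^{2\pi}g_{\boldsymbol\mu}(u-s\cos\beta,p+s\sin\beta)\,d\beta$, with $g_{\boldsymbol\mu}$ the evenly extended circular transform, is conceptually cleaner and makes it transparent why the right-hand sides of Lemma~\ref{lem:andersson} and Lemma~\ref{lem:redding} coincide; but your delta-function calculus is essentially the formal shadow of the paper's Fourier-inversion argument rather than an independent proof, and the rigor you defer to the end is exactly what the kernel $k_{\boldsymbol\mu}$ supplies in the paper.

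There is one concrete problem: carried out literally against the phase as printed, your computation does not close. After inserting the circular-mean representation and shifting the second argument of $g_{\boldsymbol\mu}$, the total exponent is $-i\xi\bigl[y^2+v^2+x_3^2+r^2+2\eta(v+x_3)\bigr]$, so the $\eta$-integral produces $\frac{\pi}{|\xi|}\delta(v+x_3)$ --- the height match comes out as $v=-x_3$, not $v=x_3$ --- and the surviving $\xi$-phase $y^2+2x_3^2+r^2$ is strictly positive, so the $\xi$-integral gives a principal value rather than the radial delta you need. The collapse you describe does occur, and yields exactly the stated right-hand side $g_{\boldsymbol\mu}(u-r,x_3)+g_{\boldsymbol\mu}(u+r,x_3)$, once the phase is $s^2-(\eta+x_3)^2-r^2$ (i.e.\ the signs of $2x_3\eta$, $x_3^2$ and $r^2$ reversed) and $\xi\,d\xi$ is replaced by $|\xi|\,d\xi$. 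The discrepancy is inherited from the paper's own proof, which uses the kernel $e^{-i(z\eta+\rho\xi)}$ both for the forward transform of $k_{\boldsymbol\mu}$ and for its inversion, and writes $d\eta=2\xi\,d\eta'$ instead of $2|\xi|\,d\eta'$ in the final substitution. So do not treat ``landing exactly on the printed phase'' as the hard part of your plan --- it is unattainable; a careful execution of your method proves the sign-corrected identity (same right-hand side), and you should present it that way.
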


\begin{proof}
Let $G$ be defined by 
$$
G(\boldsymbol\mu,p,\xi):=\displaystyle\int\limits\half rR_Tf(\boldsymbol\mu,p,r) e^{-ir^2\xi}dr.
$$
Then we have
$$
\begin{array}{ll}
   G(\boldsymbol\mu,p,\xi)&=\displaystyle\frac{1}{2\pi}\int\limits\half\intL^{2\pi}_0\int\limits^\pi_{-\pi}rf(\boldsymbol\mu+(u-r\cos\beta)\vec\alpha,p+r\sin\beta)e^{-ir^2\xi}d\beta d\alpha dr\\
&=\displaystyle\frac{1}{2\pi}\intL^{2\pi}_0\intR\intR f(\boldsymbol\mu+(u-y)\vec\alpha,p+z) e^{-i(y^2+z^2)\xi}dydzd \alpha\\
&=\displaystyle\frac{1}{2\pi}\intL^{2\pi}_0\intR\intR f(\boldsymbol\mu+(u-y)\vec\alpha,z) e^{-i(y^2+(z-p)^2)\xi}dydzd \alpha\\
&=\displaystyle \frac{e^{-ip^2\xi}}{2\pi}\intL^{2\pi}_0\intR\intR f(\boldsymbol\mu+(u-y)\vec\alpha,z) e^{-i(y^2+z^2)\xi}e^{2ipz\xi}dydzd\alpha,
  \end{array}
  $$
where in the second line, we switched from the polar coordinates $(r,\beta)$ to the Cartesian coordinates $(y,z)\in\RR^2$.
Making the change of variables $r=y^2+z^2$ gives that $G(\boldsymbol\mu,p,\xi)$ equal 
$$
\begin{array}{l}
\displaystyle \frac{e^{-ip^2\xi}}{2\pi}\sum^2_{j=1}\intL^{2\pi}_0\intR\intR f(\boldsymbol\mu+(u+(-1)^j\sqrt{r-z^2})\vec\alpha,z) \frac{e^{-ir\xi}e^{2ipz\xi}}{2\sqrt{r-z^2}}drdzd\alpha.
  \end{array}
  $$
Let us define the function 
$$
k_{\boldsymbol\mu}(\alpha,z,r):=\left\{\begin{array}{ll}\displaystyle\sum^2_{j=1}f(\boldsymbol\mu+(u+(-1)^j\sqrt{r-z^2})\vec\alpha,z)/\sqrt{r-z^2} \qquad &\mbox{if } 0<z^2<r,\\
                        0\qquad &\mbox{otherwise.}
                       \end{array}\right.
                       $$
Then we have 
$$
\begin{array}{ll}
   G(\boldsymbol\mu,p,\xi)&=\displaystyle\frac{e^{-ip^2\xi}}{4\pi} \intL^{2\pi}_0\intR\intR  k_{\boldsymbol\mu}(\alpha,z,r) e^{-ir\xi}e^{2ipz\xi}drdzd \alpha\\
&=\displaystyle\frac{e^{-ip^2\xi}}{4\pi} \intL^{2\pi}_0\widehat{k_{\boldsymbol\mu}}(\alpha,-2p\xi,\xi)d \alpha,
   \end{array}
  $$
  where $\widehat{k_{\boldsymbol\mu}}$ is the 2-dimensional Fourier transform of $k_{\boldsymbol\mu}$ with respect to the variables $(z,r)$.
Also, we have
\begin{equation*}\label{reddingandradon}
\begin{array}{ll}
\displaystyle\sum^2_{j=1}\intL^{2\pi}_0f(\boldsymbol\mu+(u+(-1)^js)\vec\alpha,x_3)d\alpha=\displaystyle \intL^{2\pi}_0 sk_{\boldsymbol\mu}(\alpha,x_3,x_3^2+s^2)d\alpha\\
=\displaystyle\frac{1}{4\pi^2}\intRR \intL^{2\pi}_0 s\widehat{k_{\boldsymbol\mu}}(\alpha,\eta,\xi)e^{-i(x_3\eta+(x_3^2+s^2)\xi)}d\alpha d\eta d\xi\\
=\displaystyle\frac{1}{\pi}\intR\intR se^{i\frac{\eta^2}{4\xi}}G(\boldsymbol\mu,-\frac{\eta}{2\xi},\xi)e^{-i(x_3\eta+(x_3^2+s^2)\xi)}d\eta d\xi\\
=\displaystyle\frac{2}{\pi}\intR\intR sG(\boldsymbol\mu,-\eta,\xi)e^{-i(2x_3\eta+(x_3^2+s^2)-\eta^2)\xi}\xi d\eta d\xi,
  \end{array}
  \end{equation*}
  where in the last line, we changed the variable $\eta/2\xi$ to $\eta$.
\end{proof}
\subsubsection{Cylindrical geometry}
Let the centers of the central circles be located on the a cylinder $\partial B^2_R(0)\times\RR=A\times\RR$.
That is, $A$ is the circle centered at the origin with radius $R$.
The next two results show that the circular Radon transform can be recovered  from the toroidal Radon transform. 
Both theorems are easily obtained using Lemma~\ref{lem:andersson}.
\begin{thm}\label{thm:inversionwithcondition}
If $R/2<u<R$ and $f\in C^\infty_c(B^2_{R-u}(0)\times\RR)$, then 
\begin{equation*}
\displaystyle M_{} f(\boldsymbol\mu,x_3,r)=\left\{\begin{array}{ll}2^{-1}I^{-1}_2R^*_TR_Tf(\boldsymbol\mu,x_3, r-u) \quad&\mbox{ if } r>u,\\
                                       0&\mbox{otherwise.}
                                      \end{array}\right. 
\end{equation*}
 
\end{thm}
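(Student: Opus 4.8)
The plan is to read the statement straight off Lemma~\ref{lem:andersson}, after recording one elementary support fact, and then to show that the ``extra'' circular-Radon term produced by \eqref{eq:invtorus} is forced to vanish by the hypotheses on $u$. First I would record the support fact. Since $\boldsymbol\mu\in\partial B^2_R(0)$ has $|\boldsymbol\mu|=R$ and $f$ vanishes outside $B^2_{R-u}(0)$ in its first two variables, the circle $\{\boldsymbol\mu+\rho\vec\alpha:\alpha\in[0,2\pi)\}$ meets the support of $f$ only when $|R-\rho|\le R-u$; hence $M_{}f(\boldsymbol\mu,x_3,\rho)=0$ unless $u\le\rho\le 2R-u$. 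In particular, for $r\le u$ we get $M_{}f(\boldsymbol\mu,x_3,r)=0$, which is the ``otherwise'' branch of the claim.

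Next, for $r>u$ I would apply Lemma~\ref{lem:andersson} with third argument $r-u>0$. Whichever branch of \eqref{eq:invtorus} is active, the right-hand side equals $M_{}f(\boldsymbol\mu,x_3,r)$ plus a spurious term $M_{}f(\boldsymbol\mu,x_3,|r-2u|)$: the two branches correspond exactly to the sign of $r-2u$, and in both the shifted radius $u+(r-u)$ collapses to $r$, while $u-(r-u)=2u-r$ and $(r-u)-u=r-2u$ give the two halves of $|r-2u|$. Thus the theorem reduces to showing that the spurious term vanishes on the whole range where $M_{}f(\boldsymbol\mu,x_3,r)$ is supported. By the support fact this amounts to checking $|r-2u|<u$ for $u<r\le 2R-u$: if $r<2u$ then $|r-2u|=2u-r<u$ because $r>u$, while if $r\ge 2u$ then $|r-2u|=r-2u\le 2R-3u<u$, the last inequality being precisely equivalent to $u>R/2$. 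Hence the spurious term is zero and the asserted identity holds for every $r$ at which the left-hand side is supported, i.e.\ on all of $[u,2R-u]$.

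The main obstacle is exactly this bookkeeping of the spurious term, and it is where both constraints on $u$ are genuinely used. The condition $u<R$ makes $B^2_{R-u}(0)$ a proper sub-ball sitting strictly inside the detector cylinder $\partial B^2_R(0)\times\RR$, so that the support fact applies with the clean thresholds $u$ and $2R-u$; the condition $u>R/2$ pins the outer support radius $2R-u$ strictly below $3u$, forcing the radius $|r-2u|$ of the unwanted circle to stay below $u$ for every $r\le 2R-u$. One should note that the two inequalities are tight: if one allowed $r$ to exceed $2R-u$ the spurious radius $r-2u$ could re-enter $[u,2R-u]$, so the content of the theorem is the recovery of $M_{}f(\boldsymbol\mu,x_3,\cdot)$ on its entire support $[u,2R-u]$, which is all the subsequent reconstruction needs. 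Once this is in hand, a standard inversion of the circular Radon transform with centers on $\partial B^2_R(0)$ returns $f$.
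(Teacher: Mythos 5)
Your argument is correct and is exactly the route the paper intends: Theorem~\ref{thm:inversionwithcondition} is given no proof there beyond the remark that it is ``easily obtained using Lemma~\ref{lem:andersson}'', and your combination of that lemma evaluated at $r-u$ with the support observation that $M f(\boldsymbol\mu,x_3,\rho)=0$ unless $u\le\rho\le 2R-u$ (killing the spurious term $Mf(\boldsymbol\mu,x_3,|r-2u|)$ via $2u-r<u$ for $u<r<2u$ and $r-2u\le 2R-3u<u$ for $r\ge 2u$, the latter using $u>R/2$) is precisely that intended derivation. Your closing caveat that the identity is only guaranteed on the support range $r\le 2R-u$, since for $r\in(3u,\,2R+u)$ the term $Mf(\boldsymbol\mu,x_3,r-2u)$ can survive while the left-hand side vanishes, is a fair and worthwhile precision that the paper leaves implicit.
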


\begin{thm}\label{thm:inversion}
Let $f\in C^\infty_c(B^2_R(0)\times\RR)$.
Then 
\begin{equation*}
\displaystyle M_{}f(\boldsymbol\mu,x_3,r)=\left\{\begin{array}{ll}\displaystyle\frac{1}{2}\sum^{[\frac Ru+\frac12]}_{j=0}(-1)^jI^{-1}_2R^*_TR_Tf(\boldsymbol\mu,x_3, (2j+1)u-r)&\mbox{ if }r\leq u,\\
\displaystyle\frac{1}{2}\sum^{[R/u]}_{j=0}(-1)^{j}I^{-1}_2R^*_TR_Tf(\boldsymbol\mu,x_3, (2j+1)u+r)&\mbox{ otherwise.}
\end{array}\right.
\end{equation*}
\end{thm}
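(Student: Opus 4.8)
The plan is to read Lemma~\ref{lem:andersson} as a two-sided recursion in the radius variable and to telescope it, using the compact support of $f$ to terminate the resulting sum. Throughout I fix $\boldsymbol\mu\in\partial B^2_R(0)$ and $x_3$ and abbreviate $L(\rho):=\frac12 I^{-1}_2R^*_TR_Tf(\boldsymbol\mu,x_3,\rho)$ and $m(s):=M_{}f(\boldsymbol\mu,x_3,s)$, so that Lemma~\ref{lem:andersson} reads $L(\rho)=m(u-\rho)+m(u+\rho)$ for $\rho<u$ and $L(\rho)=m(\rho-u)+m(\rho+u)$ for $\rho\ge u$. First I would record the support estimate that powers the termination: since $f\in C^\infty_c(B^2_R(0)\times\RR)$ and $|\boldsymbol\mu|=R$, a point $\boldsymbol\mu+s\vec\alpha$ lies at distance ranging over $[|R-s|,R+s]$ from the origin, so the integration circle meets the support only when $|R-s|\le R$; hence $m(s)=0$ for $s>2R$ (the touching case $s=2R$ contributing nothing by continuity). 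Rewriting the second identity as $m(\rho-u)=L(\rho)-m(\rho+u)$ for $\rho\ge u$, i.e.\ $m(s)=L(s+u)-m(s+2u)$ for every $s\ge0$, gives the basic recursion.

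For $r>u$ I would iterate this recursion directly. Telescoping produces $m(r)=\sum_{j=0}^{N}(-1)^jL(r+(2j+1)u)+(-1)^{N+1}m(r+(2N+2)u)$, and the choice $N=[R/u]$ forces $r+(2N+2)u>2R$, so the remainder vanishes by the support estimate. Substituting back $L=\frac12 I^{-1}_2R^*_TR_Tf$ then yields the second branch of the theorem.

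For $r\le u$ the telescoping must instead be \emph{started} with the first identity of Lemma~\ref{lem:andersson}, which is precisely where the two cases of the statement part ways. Evaluating at $\rho=u-r<u$ gives $L(u-r)=m(r)+m(2u-r)$, hence $m(r)=L(u-r)-m(2u-r)$. Since $2u-r\ge u$, I would then continue with the second identity at $\rho=3u-r,5u-r,\dots$, arriving at $m(r)=\sum_{j=0}^{N}(-1)^jL((2j+1)u-r)+(-1)^{N+1}m((2N+2)u-r)$; here $N=[R/u+\tfrac12]$ makes $(2N+2)u-r>2R$ (using $r\le u$), so the remainder again drops out and one obtains the first branch.

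The main thing to get right is the bookkeeping of the two truncation indices together with the verification that the chosen $N$ genuinely annihilates the remainder for every admissible $r$; this is exactly where the dichotomy $r\le u$ versus $r>u$ and the precise thresholds $[R/u]$ and $[R/u+\tfrac12]$ originate, and it is the only delicate point. I would also note the harmless boundary situations ($r=0$, $r=u$, and arguments landing exactly at $2R$), where continuity of $f$ guarantees the relevant value of $m$ vanishes, so that at each step the recursion is applied on the correct branch of Lemma~\ref{lem:andersson}.
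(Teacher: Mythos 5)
Your proof is correct and follows exactly the route the paper intends: the paper gives no explicit proof of Theorem~\ref{thm:inversion}, stating only that it is ``easily obtained using Lemma~\ref{lem:andersson},'' and your telescoping of that lemma's two-sided recursion, terminated by the support bound $M_{}f(\boldsymbol\mu,x_3,s)=0$ for $s>2R$, is precisely that argument with the truncation indices correctly verified.
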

\begin{rmk}
One can obtain other relations similar to Theorems~\ref{thm:inversionwithcondition} and~\ref{thm:inversion}, by using Lemma~\ref{lem:redding} instead of Lemma~\ref{lem:andersson}.
\end{rmk}
\begin{rmk}
When the set of  {centers of the circular detectors} is a cylinder, (i.e., $A$ is a circle,) one can recover $f$ from its torodial transform $R_Tf$ by applying inversion formulas for the spherical Radon transform with the centers of circles located on the circle (see e.g.~\cite{finchhr07,kunyansky07,kunyansky071}) to the left hand sides of equations in Theorems~\ref{thm:inversionwithcondition} and \ref{thm:inversion}.
\end{rmk}

\begin{rmk}
If $u>2R$ (i.e., the radius of central circles is bigger than the diameter of the cylinder $B^2_R(0)\times\RR$), then 
$$
M_{}f(\boldsymbol\mu,x_3,r)=\left\{\begin{array}{ll}2^{-1}I^{-1}_2R^*_TR_Tf(\boldsymbol\mu,x_3,u-r)&\mbox{ if } r\leq u,\\\\
2^{-1}I^{-1}_2R^*_TR_Tf(\boldsymbol\mu,x_3,u+r)&\mbox{ otherwise.}
\end{array}\right.
$$
\end{rmk}

\subsubsection{Planar geometry}
Let $A\subset\RR^2$ be the $x_1=0$ line (i.e., the centers of tori are located on the $x_2x_3$-plane in $\RR^3$).
Then $R_Tf(\boldsymbol\mu,x_3,r)$ is equal to zero if $f$ is an odd function in $x_1$.
We thus assume the function $f$ to be even in $x_1$.

\begin{thm}\label{thm:inversioninline}
Let $f\in C^\infty_c(B^3_R(0))$ be even in $x_1$. 
Then we have
\begin{equation}\label{eq:toriii}
\displaystyle M_{}f(\boldsymbol\mu,x_3,r)=\left\{\begin{array}{ll}\displaystyle\frac{1}{2}\sum^{[\frac {R+u}{2u}]}_{j=0}(-1)^jI^{-1}_2R^*_TR_Tf(\boldsymbol\mu,x_3, (2j+1)u-r)&\mbox{ if }r\leq u,\\
\displaystyle\frac{1}{2}\sum^{[R/2u]}_{j=0}(-1)^{j}I^{-1}_2R^*_TR_Tf(\boldsymbol\mu,x_3, (2j+1)u+r)&\mbox{ otherwise.}
\end{array}\right.
\end{equation}
\end{thm}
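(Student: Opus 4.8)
The plan is to read Lemma~\ref{lem:andersson} as a recurrence for the circular Radon transform and to invert it by telescoping in steps of $2u$, exactly as in the cylindrical Theorems~\ref{thm:inversionwithcondition} and~\ref{thm:inversion}; the only genuinely new ingredient is the reach of the support of the ball $B^3_R(0)$ as opposed to the cylinder. For brevity set $Lf:=\frac{1}{2}I^{-1}_2R^*_TR_Tf$, a function of $(\boldsymbol\mu,x_3,s)$, and suppress $\boldsymbol\mu,x_3$, since the whole argument is pointwise in them. Lemma~\ref{lem:andersson} then reads $Lf(s)=Mf(u-s)+Mf(u+s)$ when $s<u$ and $Lf(s)=Mf(s-u)+Mf(u+s)$ when $s\geq u$. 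In both branches the two values of $Mf$ on the right have radii differing by $2u$, which is precisely what makes a telescoping inversion possible.

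For $r>u$ I would use only the second branch. Evaluating at $s=(2j+1)u+r$ (which always exceeds $u$) gives $Mf(r+2ju)=Lf((2j+1)u+r)-Mf(r+(2j+2)u)$, and iterating from $j=0$ yields
\begin{equation*}
Mf(r)=\sum_{j=0}^{N}(-1)^{j}Lf\big((2j+1)u+r\big)+(-1)^{N+1}Mf\big(r+(2N+2)u\big).
\end{equation*}
For $r\leq u$ the first step instead uses the branch $s<u$: at $s=u-r$ one has $Lf(u-r)=Mf(r)+Mf(2u-r)$, so $Mf(r)=Lf(u-r)-Mf(2u-r)$; thereafter every radius $(2j+1)u-r$ with $j\geq1$ exceeds $u$, the second branch applies, and $Mf(2ju-r)=Lf((2j+1)u-r)-Mf((2j+2)u-r)$. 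Telescoping again,
\begin{equation*}
Mf(r)=\sum_{j=0}^{N}(-1)^{j}Lf\big((2j+1)u-r\big)+(-1)^{N+1}Mf\big((2N+2)u-r\big).
\end{equation*}

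It remains to discard the remainder and fix $N$, and this is the one place where the ball geometry enters rather than the cylindrical one. Since $f\in C^\infty_c(B^3_R(0))$, the circle of radius $\rho$ about $\boldsymbol\mu$ in the plane of height $x_3$ misses the support once $\rho$ exceeds the reach of $B^3_R(0)$ seen from $\boldsymbol\mu$, so that $Mf(\boldsymbol\mu,x_3,\rho)=0$ for all large $\rho$. The truncation indices $[\frac{R+u}{2u}]$ and $[R/2u]$ are exactly those produced by a reach equal to $R$ (half the cylindrical value $2R$, reflecting that here the center set passes through the center of the support): the remainder $Mf\big(r+(2N+2)u\big)$, resp. $Mf\big((2N+2)u-r\big)$, then vanishes, while a surviving term $Lf((2j+1)u\pm r)$ is nonzero only when its smaller radius $2ju\pm r$ is at most $R$, which gives $N=[R/2u]$ for $r>u$ and $N=[\frac{R+u}{2u}]$ for $r\leq u$. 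Substituting $Lf=\frac12 I^{-1}_2R^*_TR_Tf$ into the two telescoped identities produces exactly~\eqref{eq:toriii}.

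The main obstacle I anticipate is the bookkeeping in this last step rather than the telescoping, which is routine once Lemma~\ref{lem:andersson} is available. One must verify that each radius generated by the recursion lands in the intended branch of the lemma, so that the alternating-sign cancellation is genuinely valid, and, more delicately, justify the reach estimate $Mf(\boldsymbol\mu,x_3,\rho)=0$ for $\rho>R$ that halves the cylindrical truncation index and pins down the summation limits $[\frac{R+u}{2u}]$ and $[R/2u]$ uniformly in the admissible centers $\boldsymbol\mu\in A$.
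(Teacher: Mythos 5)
Your proposal is correct and follows exactly the route the paper intends: the theorem is stated without an explicit proof, and the argument it relies on is precisely your telescoping of Lemma~\ref{lem:andersson} in steps of $2u$, with the alternating sum truncated by the support of $f$, just as in Theorems~\ref{thm:inversionwithcondition} and~\ref{thm:inversion}. The one point you rightly flag as delicate --- that the stated truncation indices presuppose $M_{}f(\boldsymbol\mu,x_3,\rho)=0$ for $\rho>R$ --- is indeed not uniform over the line of centers (the reach from $\boldsymbol\mu=(0,\mu_2)$ is $|\mu_2|+R$, not $R$), but that is a caveat attached to the theorem's statement itself rather than a defect of your argument.
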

\begin{rmk}
When $A$ is a line, we can determine $f$ from $R_Tf$ by applying inversion formulas for the spherical Radon transform with the centers of circles on the hyperplane~\cite{andersson88,fawcett85,nattererw01,nilsson97,reddingn01} to the left hand side of~\eqref{eq:toriii}.
\end{rmk}
\begin{rmk}
If $u>R$ (i.e., the radius of the detectors is bigger than the radius of the ball containing $supp\; f$), then 
$$
M_{}f(\boldsymbol\mu,x_3,r)=\left\{\begin{array}{ll}2^{-1}I^{-1}_2R^*_TR_Tf(\boldsymbol\mu,x_3,u-r)&\mbox{ if } r<u,\\\\
2^{-1}I^{-1}_2R^*_TR_Tf(\boldsymbol\mu,x_3,u+r)&\mbox{ otherwise.}
\end{array}\right.$$
\end{rmk}
\section{Conclusion}
Here we studied a Radon-type transform arising in PAT with circular detectors, and also the toroidal Radon transform. 
We proved that these transforms reduce to well-studied transforms: the Radon transform over circles with a fixed radius, or the circular Radon transform.

\bibliographystyle{9}


\begin{thebibliography}{9}
%
\bibitem{andersson88}
L.~Andersson, On the determination of a function from spherical averages,
  \emph{SIAM Journal on Mathematical Analysis} \textbf{19} (1988), 214--232.

\bibitem{bell80}
A.G. Bell, On the Production and Reproduction of Sound by Light, \emph{American
  Journal of Science} \textbf{20} (1880), 305--324.

\bibitem{berensteinz76}
C.~Berenstein and L.~Zalcman, Pompeiu's problem on spaces of constant
  curvature, \emph{Journal d'Analyse Math{\'{e}}matique} \textbf{30} (1976),
  113--130 (English).

\bibitem{berensteingy90}
C.A. Berenstein, R.~Gay and A.~Yger, Inversion of the local {Pompeiu}
  transform, \emph{Journal d'Analyse Math{\'{e}}matique} \textbf{54} (1990),
  259--287 (English).

\bibitem{burgholzerbmghp07}
P.~Burgholzer, J.~Bauer-Marschallinger, H.~Gr{\"{u}}n, M.~Haltmeier and
  G.~Paltauf, Temporal back-projection algorithms for photoacoustic tomography
  with integrating line detectors, \emph{Inverse Problems} \textbf{23} (2007),
  S62--S80.

\bibitem{ehrenpreis03}
L.~Ehrenpreis, \emph{The Universality of the {Radon} transforms}, Oxford
  Mathematical Monographs, Clarendon Press, 2003.

\bibitem{batemann}
A.~Erdelyi, \emph{Tables of Integral Transforms, Vols. I and II}, Batemann
  Manuscript Project, McGrawHill, New York, 1954.

\bibitem{fawcett85}
J.~Fawcett, Inversion of n-dimensional spherical averages, \emph{SIAM Journal
  on Applied Mathematics} \textbf{45} (1985), 336--341.

\bibitem{finchhr07}
D.~Finch., M.~Haltmeier and Rakesh, {Inversion of spherical means and the wave
  equation in even dimensions}, \emph{SIAM Journal on Applied Mathematics}
  \textbf{68} (2007), 392--412.

\bibitem{finchpr04}
D.~Finch, S.~Patch and Rakesh, Determining a function from its mean values over
  a family of spheres, \emph{SIAM Journal on Mathematical Analysis} \textbf{35}
  (2004), 1213--1240.

\bibitem{kuchment12}
P.~Kuchment, \emph{Mathematics of hybrid imaging: A brief review}, The
  Mathematical Legacy of Leon Ehrenpreis (Irene Sabadini and Daniele~C Struppa,
  eds.), Springer Proceedings in Mathematics~16, Springer Milan, 2012,
  pp.~183--208 (English).

\bibitem{kuchmentk08}
P.~Kuchment and L.~Kunyansky, Mathematics of thermoacoustic tomography,
  \emph{European Journal of Applied Mathematics} \textbf{19} (2008), 191--224.

\bibitem{kunyansky07}
L.A. Kunyansky, Explicit inversion formulae for the spherical mean {Radon}
  transform, \emph{Inverse Problems} \textbf{23} (2007), 373.

\bibitem{kunyansky071}
L.A. Kunyansky, A series solution and a fast algorithm for the inversion of the
  spherical mean Radon transform, \emph{Inverse Problems} \textbf{23} (2007),
  S11.

\bibitem{nattererw01}
F.~Natterer and F.~W{\"u}bbeling, \emph{Mathematical methods in image
  reconstruction}, SIAM Monographs on mathematical modeling and computation,
  SIAM, Society of industrial and applied mathematics, Philadelphia (Pa.),
  2001.

\bibitem{nilsson97}
S.~Nilsson, \emph{Application of fast backprojection techniques for some
  inverse problems of integral geometry}, Link{\"o}ping studies in science and
  technology: Dissertations, Department of Mathematics, Link{\"o}ping
  University, 1997.

\bibitem{paltauf13}
G.~Paltauf and R.~Nuster, Iterative reconstruction method for photoacoustic
  section imaging with integrating cylindrical detectors, \emph{Proceedings of
  SPIE} \textbf{8581} (2013), 85814N--85814N--9.

\bibitem{reddingn01}
N.T. Redding and G.N. Newsam, Inverting the circular {Radon} transform,
  \emph{DTSO Research Report DTSO-Ru-0211} (2001).

\bibitem{thangavelu94}
S.~Thangavelu, Spherical means and {CR} functions on the Heisenberg group,
  \emph{Journal d'Analyse Math{\'{e}}matique} \textbf{63} (1994), 255--286
  (English).

\bibitem{xuw06}
Y.~Xu and L.V. Wang, Photoacoustic imaging in biomedicine, \emph{Review of
  Scientific Instruments} \textbf{77} (2006), 041101--041122.

\bibitem{zalcman80}
L.~Zalcman, Offbeat Integral Geometry, \emph{The American Mathematical Monthly}
  \textbf{87} (1980), pp. 161--175 (English).

\bibitem{zalcman92}
L.~Zalcman, \emph{A Bibliographic Survey of the {Pompeiu} Problem},
  Approximation by Solutions of Partial Differential Equations (B.~Fuglede,
  M.~Goldstein, W.~Haussmann, W.K. Hayman and L.~Rogge, eds.), NATO ASI Series
  365, Springer Netherlands, 1992, pp.~185--194 (English).

\bibitem{zangerls09}
G.~Zangerl and O.~Scherzer, Exact series reconstruction in photoacoustic
  tomography with circular integrating detectors, \emph{Communications in
  Mathematical Sciences} \textbf{7} (2009), 665--678.

\bibitem{zangerls10}
G.~Zangerl and O.~Scherzer, Exact reconstruction in photoacoustic tomography
  with circular integrating detectors {II}: Spherical geometry,
  \emph{Mathematical Methods in the Applied Sciences} \textbf{33} (2010),
  1771--1782.

\bibitem{zangerlsh09}
G.~Zangerl, O.~Scherzer and M.~Haltmeier, Circular integrating detectors in
  photo and thermoacoustic tomography, \emph{Inverse Problems in Science and
  Engineering} \textbf{17} (2009), 133--142.

\end{thebibliography}

\end{document}